\newcommand{\len}{{\rm len}}
\newtheorem{theorem}{Theorem}[section]
\newtheorem{proposition}[theorem]{Proposition}
\newtheorem{lemma}[theorem]{Lemma}
\newcommand\dual[1]{\check{#1}}
\newcommand\dom{\mathop\mathrm{dom}}
\newcommand\FV{\mathop\text{FV}}
\newcommand\nmodels\nvDash
\renewcommand\models\vDash
\newcommand\FO{\mathrm{FO}}
\newcommand\df{\mathrm{D}}
\newcommand\M{\mathbb{M}}
\def\dep{\mathord=}
\renewcommand\varphi\phi
\renewcommand\overline\bar
\newcommand\imp\rightarrow
\renewcommand\vec\bar
\renewcommand\H{\mathcal{H}}
\begin{document}

\title{Dependence Logic with Generalized Quantifiers: Axiomatizations} 
\author{Fredrik Engstr\"om \and Juha Kontinen \and Jouko V\"a\"an\"anen}
\date{\today}

\begin{abstract}
	We prove two completeness results, one for the extension of dependence logic by a monotone
	generalized quantifier $Q$ with weak interpretation, weak in the meaning that the interpretation of 
	$Q$ varies with 
	the structures. The second result considers the extension of dependence logic where $Q$ is
	interpreted as 
	``there exists uncountable many.'' Both of the axiomatizations are shown to be sound and complete for
	$\FO(Q)$ consequences.
\end{abstract}

\maketitle

\section{Introduction}

Generalized quantifiers constitute a well-studied method of extending the expressive power of first order logic. A more recent extension of first order logic is obtained by adding dependence atoms, permitting the expression of partially ordered quantification. In this paper we study the combination of the two methods, adding to first order logic both generalized quantifiers and dependence atoms as defined  in  \cite{Engstrom:2011}. It was shown in  \cite{Engstrom.Kontinen:2013} that the resulting extension properly extends both the respective extension  by generalized quantifiers and the extension by dependence atoms. We analyse further the expressive power and give natural axioms for the new logic. There are theoretical limits to the extent that the axioms can be complete but we give partial completeness results in the sense that completeness is shown with respect to $\FO(Q)$ consequences. 

Generalized quantifiers were introduced by Mostowski \cite{Mostowski:1957}. The most important of them  is perhaps the quantifier
\begin{equation*}
\M\models Q_1x\phi(x,\vec{b})\iff \M\models \phi(a,\vec{b})\mbox{ for uncountably many $a\in M$}
\end{equation*}
owing to the beautiful axiomatization of it by Keisler \cite{keisler1970logic}. On the other hand, generalized quantifiers have made an entrance to both linguistics \cite{Peters2006-PETQIL} and computer science \cite{MR1336413}. In natural language  we can use generalized quantifiers to analyse constructs such as
\begin{equation*}
\label{nl}\mbox{Most boys run,}
\end{equation*}
where we think of ``most" as a generalized quantifier. Other natural language quantifiers are ``two thirds", ``quite a few", ``many", etc. There are various so-called polyadic lifts of such quantifiers, for example,
\medskip

\noindent Ramsey lift:
$$\mbox{ At least two thirds of the boys in your class like each other. 
\cite{MR1457200}}$$

\noindent Branching lift:
$$\mbox{ A few boys in my class and a few girls in your class}\mbox{ have dated each other. \cite{MR527403}} $$

\noindent Resumption lift:
$$\mbox{ Most neighbours like each other. \cite{MR1457200}}$$

In computer science, or more exactly finite model theory, we have the {\em counting quantifiers}
\begin{equation*}
\label{cnt}\M\models \exists_{\ge k}x\phi(x,\vec{b})\iff \M\models  \phi(a,\vec{b})\mbox{ for at least $k$ elements $a\in M$}
\end{equation*}
which, although first order definable, have turn out relevant, but also the non-first order
\begin{equation*}
\label{evn}\M\models \exists_{\mbox{\tiny even}}x\phi(x,\vec{b})\iff \M\models\phi(a,\vec{b})\mbox{ for an even number of $a\in M$}
\end{equation*}
and \begin{equation*}
\label{half}\M\models \exists_{n/2}x\phi(x,\vec{b})\iff \M\models  \phi(a,\vec{b})\mbox{ for at least 50\%  of the elements  $a\in M$}
\end{equation*}
and other similar ones. The lifts, also called vectorizations, are important in finite model theory, too. For example, the resumption lift sequence of the transitive closure quantifiers characterises in ordered models NLOGSPACE \cite{DBLP:journals/siamcomp/Immerman87}, and the resumption lift sequence of the so-called alternating transitive closure quantifier characterises, even in unordered models, least fixpoint logic \cite{DBLP:conf/birthday/Dahlhaus87}.  

Dependence logic was introduced in \cite{Vaananen:2007}. It gives compositional semantics to the partially ordered quantifiers of \cite{Henkin:1961}:
$$\left(\begin{array}{ll}
\forall x&\exists y\\
\forall u&\exists v
\end{array}\right)\phi(x,y,u,v,\vec{z})\iff\exists f\exists g\forall x\forall u\phi(x,f(x),u,g(u),\vec{z}).$$
The compositional analysis is 
$$\left(\begin{array}{ll}
\forall x&\exists y\\
\forall u&\exists v
\end{array}\right)\phi(x,y,u,v,\vec{z})\iff
\forall x\exists y\forall u\exists v(\dep(\vec{z},u,v)\wedge\phi(x,y,u,v,\vec{z})),$$ where $\dep(\vec{z},u,v)$ is a so-called {\em dependence atom}.
Dependence logic has the same expressive power as existential second order logic \cite{Kontinen:2009}. Thus dependence logic alone cannot express, for example, uncountability, in fact not even finiteness.

The idea of combining partially ordered quantifiers and generalized quantifiers was first suggested by 
Barwise \cite{MR527403}. He used this combination to analyse  lifts such as the Ramsey lift and the branching lift. It was proved in  \cite{MR1457200} that the polyadic lifts of monotone (unbounded) generalized quantifiers lead to a strong hierarchy, giving immediately the result that there is no finite number of generalized quantifiers, including partially ordered quantifiers, which would be able to express all Ramsey lifts of a given monotone (unbounded) quantifier. The same is true of branching lifts, and to a lesser extent of resumption lifts \cite{MR1457200}. 

The situation is quite different with the extension of {\em dependence} logic (rather than {\em first order} logic) by a monotone generalized quantifier. All the mentioned polyadic lifts (and vectorizations) can be readily defined (in all arities). Let us see how this is done for the Ramsey lift of a monotone quantifier $Q$. 
 $$\exists A\in Q\forall x\in A\forall y\in A\phi(x,y,\vec{z})$$
 $$\iff$$
$$\begin{array}{ll}
\exists w(\dep(\vec{z},w)\wedge Q x\exists y(&y=w\ \wedge\\
&\forall u\exists v(\dep(\vec{z},u,v)\wedge(x=u\to v=w)\wedge\\
&\forall u'\exists v'(\dep(\vec{z},u',v')\wedge(u=u'\to v'=w)\wedge\\
&((v=w\wedge v'=w)\to\phi(u,u',\vec{z}))))))\end{array}$$ 
Respectively, the branching lift can be expressed as follows:

 $$\exists A\in Q\exists B\in Q\forall x\in A\forall y\in B\phi(x,y,\vec{z})$$
 $$\iff$$
$$\begin{array}{ll}
\exists w,w'(&\dep(\vec{z},w)\wedge\dep(\vec{z},w')\wedge\\
&Q x\exists y(y=w\wedge\dep(\vec{z},x,y)\ \wedge\\
&Q x'\exists y'(y'=w'\wedge\dep(\vec{z},x',y')\ \wedge\\
&\forall u\exists v(\dep(\vec{z},u,v)\wedge(x=u\to v=w)\wedge\\
&\forall u'\exists v'(\dep(\vec{z},u',v')\wedge(x'=u'\to v'=w')\wedge\\
&((v=w\wedge v'=w')\to\phi(u,u',\vec{z}))))))\end{array}$$ 
Resumption can be handled similarly.

Thus putting generalized quantifiers and dependence atoms together results in a powerful combination extending far beyond either generalized quantifiers alone or dependence atoms alone.   

This paper is organised as follows. In Section \ref{Prel} we review the basics on dependence logic
and  generalized quantifiers in the dependence logic context. In Section \ref{Ded} we present a
system of  natural deduction  for the extension $\df(Q,\dual Q)$  of dependence logic by a
monotone generalized quantifier $Q$ and its dual $\dual Q$, and show that these rules are sound. Finally in Section \ref{Comp} 
two completeness results for $\FO(Q)$ consequences  are shown for $\df(Q,\dual Q)$.  In  the first  result $Q$ has the so-called weak interpretation, and in the second $Q$ is  interpreted as $Q_1$, that is, ``there exists uncountable many.'' 
	

\section{Preliminaries}\label{Prel}

\subsection{Dependence Logic}

In this section we give a brief introduction to dependence logic. For a 
detailed account see \cite{Vaananen:2007}.

The syntax of dependence logic extends the syntax of first order logic with 
new atomic formulas, the dependence atoms. There is one
dependence atom for each arity. We write the atom expressing that the term 
$t_n$ is uniquely determined by the values of the terms $t_1,\ldots,t_{n-1}$ 
as
$\dep(t_1,\ldots,t_n)$. We consider formulas where negation can only appear in front of formulas
without dependence atoms. For a vocabulary $\tau$, $\df[\tau]$ denotes the set of 
$\tau$-formulas of dependence logic. 
The set $\FV(\varphi)$ of free variables of a formula $\varphi$ is defined as in first order logic except that
$$\FV(\dep(t_1,\ldots,t_n)) = \FV(t_1) \cup \ldots \cup \FV(t_n).$$

To define a compositional semantics for dependence logic we use \emph{sets of 
	assignments}, called \emph{teams}, instead of single assignments as in 
first order logic. An assignment
is a function $s: V \to M$ where $V$ is a finite set of variables and
$M$ is the universe under consideration. A team on $M$ is 
a set of assignments for some fixed finite set of variables $V$. If 
$V=\emptyset$ there is only one assignment, the empty function 
$\emptyset$. Observe that the team of the empty assignment $\set{\emptyset}$ is 
different from the empty team $\emptyset$.

\begin{itemize}
	\item Given an assignment $s: V \to M$ and $a \in M$ let $s[a/x]: V \cup \set{x} \to 
M$ be the assignment:
$$
s[a/x]: y \mapsto  \begin{cases}
s(y)  &\text{ if $y \in V \setminus \set{x}$, and}\\
a  &\text{ if $x=y$.}
\end{cases}
$$
\item Let $X[M/y]$ be the team $$\set{s[a/y] | s \in X, a \in M},$$
\item and whenever $f: X \to M$, let $X[f/y]$ denote the team $$\set{s[f(s)/y] | s \in X}.$$
	\end{itemize}
	The domain of a non-empty team $X$, denoted $\dom(X)$, is the set of variables 
$V$.
 The interpretation of the term $t$ in the model $\M$ under the assignment $s$ 
 is denoted by $t^{\M,s}$. We write $s(\overline{x})$ for the tuple obtained by pointwise
 application  of $s$ to the finite sequence $\bar x$ of variables.

The satisfaction relation for dependence logic $\M ,X \models  \varphi$ is
defined as follows.  Below, the notation $\M,s \models \varphi$ refers to the 
ordinary satisfaction relation of first order logic. We also assume that $\FV(\varphi) \subseteq
\dom(X)$.

\begin{enumerate}
	\item For formulas $\psi$ without dependence atoms: $ \M ,X 
	\models  \psi \text{ iff } \forall s \in X: \M,s \models \psi$.
\item $\M ,X \models  \dep(t_1,\ldots,t_{n+1}) \text{ iff }
\forall s,s' \in X :\bigwedge_{1\leq i \leq n}t_i^{\M,s}
=t_i^{\M,s'}  \rightarrow  t_{n+1}^{\M,s} =t_{n+1}^{\M,s'}$
\item $\M ,X \models  \varphi \land \psi \text{ iff }\M ,X \models \varphi 
	\text{ and } \M ,X \models  \psi$
\item $\M ,X \models  \varphi \lor \psi \text{ iff there are $Y$ and $Z$ such that  
	} X= Y \cup Z,\text{ and } \M,Y \models  \varphi \text{ and } \M,Z 
	\models \psi$
\item $\M ,X \models  \exists y \varphi \text{ iff there is } f: X \to M, 
	\text{ such that } \M,{X[f/y]}
\models \varphi $
\item $\M ,X \models  \forall y \varphi \text{ iff }  \M,X[M/y]
\models \varphi.$
\end{enumerate}

We define $\M \models \sigma$ for a sentence $\sigma$ to hold if $\M , 
{\set{\emptyset}} \models \sigma$.
Let us make some easy remarks.
\begin{itemize}
	\item Every formula is satisfied by the empty team. 
	\item The satisfaction relation is downwards closed:
If $\M ,X \models  \varphi$ and $Y \subseteq X$ then $\M ,Y \models  \varphi$.  
\item The satisfaction relation is local:
$\M,X \models \varphi$ iff $\M,Y\models \varphi$ where $$Y=\set{s 
	\upharpoonright \FV(\varphi) |  s \in X}.$$
\end{itemize}

The expressive power for sentences 
of dependence logic is the same as that of existential second order logic.

\subsection{D(Q)}

The notion of a generalized quantifier goes back to
Mostowski \cite{Mostowski:1957} and Lindstr\"om \cite{Lindstrom:1966}. In 
\cite{Engstrom:2011} semantics for generalized
quantifiers in the framework of dependence logic was introduced. We will review the
definitions below.

Let $Q$ be a quantifier of type $\langle k \rangle$, meaning
that $Q$ is a class of $\tau$-structures, where the signature $\tau$ has a
single $k$-ary relation symbol.  Also, assume that $Q$ is monotone 
increasing, i.e.,  for every $M$ and every $A \subseteq B \subseteq M^k$, if 
$A \in Q_M$ then also $B \in Q_M$, where $Q_M= \set{R \subseteq M^k | (M,R) \in 
Q}$.

The formulas of dependence logic extended with a quantifier $Q$, $\df(Q)$, is built up from 
$\FO(Q)$-formulas and dependence atoms using the connectives $\land$ and $\lor$, and the
quantifier expressions $\exists x$, $\forall x$ and $Qx$ in the usual way. We write $\phi\rightarrow \psi$
 as a shorthand for $\neg \phi\vee \psi$, where $\phi$ is a formula without dependence atoms.

An assignment $s$ satisfies a formula
$Q\bar x\, \varphi$ in a structure $\mathbb M$,
$$\mathbb M,s \models 
Q\bar x\,\varphi,\text{ if the set } \set{\bar a \in M^k | \mathbb M,s[\bar a /\bar x] 
\models \varphi} \text{ is in }Q_M.$$ 

In the context of teams we say that a team $X$ satisfies a formula $Q\bar x\, 
\varphi$,
\begin{equation}\label{def_q}
\mathbb M ,X \models  Q\bar x\, \varphi \text{, if there exists } F: X \to Q_M
\text{ such that } \mathbb M,{X[F/\bar x]} \models \varphi,
\end{equation}
where $X[F/\bar x] = \set{s[\bar a /\bar x] | \bar a \in F(s)}$.  
This definition works well only with monotone (increasing) quantifiers, see 
\cite{Engstrom:2011} for details.

The following easy proposition suggests that we indeed have the right truth 
condition for monotone quantifiers:

\begin{proposition}[\cite{Engstrom:2011}]\label{prop:4} 
	\begin{enumerate}[(i)]
\item\label{en1} $\df(Q)$ is downwards closed.
\item\label{en2} $\df(Q)$ is local, in the sense that $\M,X \models \varphi$ iff 
	$\M,(X \mathbin\upharpoonright \FV(\varphi)) \models \varphi$.
\item\label{en3} Viewing $\exists$ and $\forall$ as generalized quantifiers of 
	type $\langle 1 \rangle$, the truth conditions in \eqref{def_q} are 
	equivalent to the truth conditions of dependence logic.
\item\label{en5} For every $\df(Q)$ formula $\varphi$ we have $\M,\emptyset 
	\models \varphi$.
\end{enumerate}
\end{proposition}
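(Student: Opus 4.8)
The plan is to prove all four parts by simultaneous induction on the structure of the $\df(Q)$ formula $\varphi$, since the downward closure, locality, and empty-team properties mutually support one another across the inductive steps. The base cases are the $\FO(Q)$ literals and the dependence atoms $\dep(t_1,\ldots,t_n)$. For the first-order literals, all three properties follow from the defining clause that $\M,X\models\psi$ iff $\M,s\models\psi$ for every $s\in X$: downward closure is immediate since a subteam imposes fewer constraints, locality holds because $\M,s\models\psi$ depends only on $s\restriction\FV(\psi)$, and the empty-team case holds vacuously (there is no $s\in\emptyset$). For the dependence atom, downward closure and locality follow directly from the universally quantified defining condition over pairs $s,s'\in X$, and again the empty team satisfies it vacuously since there are no pairs to check.

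For the inductive step I would treat each connective and quantifier in turn, establishing \eqref{en1}, \eqref{en2}, and \eqref{en5} together; I would prove \eqref{en3} separately at the end as it concerns a single observation rather than a structural induction. The conjunction and disjunction cases are routine: for $\varphi\lor\psi$, downward closure uses the induction hypothesis after intersecting the witnessing decomposition $X=Y\cup Z$ with the subteam, the empty-team case uses the trivial decomposition $\emptyset=\emptyset\cup\emptyset$ together with the induction hypothesis, and locality follows by passing the decomposition through the restriction map. The first-order quantifier cases $\exists y\varphi$ and $\forall y\varphi$ are the standard dependence-logic arguments, using that $(X[f/y])\restriction\FV$ and $X[M/y]$ interact predictably with subteams and restrictions.

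The genuinely new case, and the one I expect to be the main obstacle, is the generalized quantifier clause $Q\bar x\,\varphi$ with its truth condition \eqref{def_q} via a function $F:X\to Q_M$. For downward closure, given $Y\subseteq X$ and a witness $F$ for $X$, I would restrict $F$ to $Y$ and apply the induction hypothesis to the subteam $X[F/\bar x]\restriction Y$; here the monotonicity of $Q$ is not even needed for downward closure, but the careful bookkeeping of how $X[F/\bar x]$ decomposes over $Y$ versus $X\setminus Y$ is where one must be precise. For the empty-team case, the function $F:\emptyset\to Q_M$ exists vacuously and yields $X[F/\bar x]=\emptyset$, which satisfies $\varphi$ by the induction hypothesis \eqref{en5}. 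Locality is the most delicate point: one must show that a witnessing $F$ on $X$ transfers to the restricted team $X\restriction\FV(Q\bar x\,\varphi)$ and back, which requires checking that $F$ can be taken to depend only on $s\restriction\FV(\varphi)$ and reconciling the free-variable sets (noting $\FV(Q\bar x\,\varphi)=\FV(\varphi)\setminus\{\bar x\}$). This is where I would invest the most care, using the induction hypothesis for locality on $\varphi$ applied to the expanded team $X[F/\bar x]$. Finally, part \eqref{en3} is a direct unwinding: reading $\exists$ as the type $\langle 1\rangle$ quantifier means $F(s)$ ranges over nonempty subsets, and choosing singletons recovers the dependence-logic clause for $\exists$, while $\forall$ as a quantifier forces $F(s)=M$, matching clause (6).
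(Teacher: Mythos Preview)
The paper does not supply its own proof of this proposition; it is stated with a citation to \cite{Engstrom:2011} and no argument is given. There is therefore nothing in the paper to compare your proposal against.

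Your plan is the standard and correct one: a simultaneous structural induction for \eqref{en1}, \eqref{en2}, \eqref{en5}, with \eqref{en3} checked separately afterwards. One small point to make explicit in your treatment of \eqref{en3} for $\exists$: ``choosing singletons'' handles only the direction from the dependence-logic clause to the generalized-quantifier clause (given $f$, set $F(s)=\{f(s)\}$). For the converse you start with $F:X\to\exists_M$ and must produce a single-valued $f:X\to M$; picking any $f(s)\in F(s)$ gives $X[f/y]\subseteq X[F/y]$, and then you need the already-established downward closure \eqref{en1} to conclude $\M,X[f/y]\models\varphi$. This is the one place where monotonicity of the quantifier (equivalently, downward closure of the logic) is genuinely doing work in the equivalence, so it is worth flagging rather than leaving implicit.
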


As proved in \cite{Engstrom.Kontinen:2013}, the expressive power of $\df(Q)$ sentences corresponds to that of 
a  certain natural extension of existential second order logic by $Q$.

In order to get a prenex normal form for all formulas we will focus on the logics $\df(Q,\dual
Q)$,  where $\dual Q$ is the dual of
$Q$, i.e, $$\dual Q = \set{(M,M^k \setminus R) | R \subseteq M^k ,(M,R) \notin Q},$$ 
instead of $\df(Q)$. Note that, according to our definition of $\df(Q)$, a formula $Qx \phi$ may be negated only if
$\phi$ is a $\FO(Q)$ formula.

We will  consider monotone increasing quantifiers $Q$ satisfying two 
non-triviality assumptions:  $(M,\emptyset)\notin Q$ and  $(M,M^k)\in Q$ for 
all $M$. In  \cite{Engstrom.Kontinen:2013} the following normal form for sentences of 
 $\df(Q)$ was shown for such non-trivial quantifiers.

\begin{theorem}
	Every $\df(Q)$ sentence in negation normal form, where $Q$ is non-trivial, can be written as 
$$
\H^1 \bar{x}_1\ldots \H^m \bar{x}_m \exists y_1\ldots \exists y_n \bigl(\bigwedge _{1\le 
	j\le
n}\dep(\overline{z}^i,y_i)\wedge\theta\bigr),
$$
where $\H^i$ is either $Q$ or $\forall$ and $\theta$ is a quantifer-free $\FO$-formula.
\end{theorem}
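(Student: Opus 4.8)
The plan is to prove the normal form by induction on the structure of the $\df(Q)$ sentence in negation normal form, pushing all quantifiers to the front while introducing dependence atoms to record the dependencies that are lost when quantifiers are reordered. I would proceed by establishing the claim for each connective and quantifier, exploiting the downward closure and locality from Proposition~\ref{prop:4}, together with the non-triviality assumptions $(M,\emptyset)\notin Q$ and $(M,M^k)\in Q$.

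\textbf{Atomic and literal cases.} First I would handle the base cases. A first-order literal $\theta$ (including a dependence atom $\dep(\vec z,y)$) is already essentially in the required shape, with an empty quantifier prefix; the dependence atoms can be absorbed into the conjunction $\bigwedge_j\dep(\vec z^i,y_i)$ by treating the relevant variables as already-quantified $y_i$'s, or else kept inside the quantifier-free kernel $\theta$ when they involve only universally bound variables.

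\textbf{Inductive cases.} For the induction step I would assume both subformulas are already in the stated normal form and combine them. The main work is in the Boolean and quantifier cases. For $\exists y\,\varphi$, the existential quantifier can be merged with the block of $\exists y_i$'s, but since it must be pulled across any $\H^i$ prefix ($\forall$ or $Q$) of $\varphi$, I introduce a fresh dependence atom $\dep(\vec z, y)$ recording that the witness for $y$ may only depend on the variables $\vec z$ bound to its left. For $\forall y\,\varphi$ and $Qy\,\varphi$, the new quantifier becomes a new $\H^i$ at the appropriate place in the prefix, and I must commute it past the existing $\exists$-block, again repairing the lost dependencies by inserting dependence atoms; here the non-triviality of $Q$ is what guarantees the commutation is sound, since it ensures $Qy$ behaves coherently with respect to the team semantics in \eqref{def_q}. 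The conjunction case $\varphi\wedge\psi$ requires renaming the bound variables of the two normal forms apart, interleaving or concatenating their prefixes, and merging their kernels; the disjunction case $\varphi\vee\psi$ is the most delicate, since $\vee$ splits the team, and I would simulate the split by introducing an auxiliary existentially quantified ``choice'' variable together with a dependence atom, reducing the disjunction to a single conjunctive kernel guarded by equalities on that variable.

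\textbf{Main obstacle.} I expect the disjunction case to be the principal difficulty, because $\vee$ in dependence logic is genuinely team-splitting rather than classical, so merging two prefixed normal forms under $\vee$ cannot be done by naive concatenation of prefixes; the dependence-atom bookkeeping must faithfully encode which assignments went to which disjunct while preserving downward closure. A secondary subtlety is ensuring that each introduced $\dep(\vec z^i,y_i)$ has its dependency tuple $\vec z^i$ consist exactly of the variables quantified to the left of $y_i$, so that commuting a quantifier past the $\exists$-block neither strengthens nor weakens the formula; verifying this equivalence rests on the locality and downward-closure properties together with the functional reading of $\exists$ in the team semantics. Once these commutation lemmas are in place, the induction closes and every quantifier can be driven to the front in the order $\H^1\cdots\H^m\exists y_1\cdots\exists y_n$ with the accumulated dependence atoms collected into the single conjunction, leaving a genuinely quantifier-free first-order kernel $\theta$.
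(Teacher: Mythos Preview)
The paper does not actually prove this theorem; it quotes it from \cite{Engstrom.Kontinen:2013}. The paper's own contribution is the closely related Proposition~\ref{prop:nf}, where a derivation of the normal form inside the natural-deduction system is given. That proof is \emph{staged} rather than a single structural induction: Step~1 drives all quantifiers to the front to obtain a prenex form $\mathcal H^1 x_1\cdots\mathcal H^m x_m\chi$; Step~2 works purely on the quantifier-free $\chi$, separating out the dependence atoms into the shape $\exists\vec z(\bigwedge_j\dep(\vec x^j,z_j)\wedge\theta^*)$; Step~4 then repairs the prefix by commuting each remaining $\exists$ inward past $Q$/$\dual Q$/$\forall$ via the dependence-introduction equivalence (rule~\ref{rule:depi}). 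Your single structural induction is a legitimate alternative organisation, but it forces you to merge two full normal forms at every binary connective, whereas the staged approach handles prenexing and dependence-atom extraction as separate, simpler problems.

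Your plan for disjunction is where the proposal drifts. You identify $\vee$ as the hard case and propose a ``choice'' variable with a dependence atom to collapse the split into a conjunctive kernel. This is unnecessary and somewhat misdirected: the target kernel $\theta$ is merely required to be a quantifier-free $\FO$-formula, so it may freely contain $\vee$. The actual mechanism---both semantically and in the paper's rule~\ref{rule6}---is that once the two disjuncts are in the shape $\exists\vec y(\bigwedge\dep\wedge\theta_0)$ with disjoint $\vec y$'s, one passes directly to $\exists\vec y_1\vec y_2(\bigwedge\dep_1\wedge\bigwedge\dep_2\wedge(\theta_1\vee\theta_2))$. The team-splitting behaviour of $\vee$ is precisely what makes this sound; no auxiliary coding variable is needed. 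Pulling the outer $\mathcal H$-prefixes of the two disjuncts to the front beforehand is handled by the scope-extension equivalences $\mathcal H x\,\varphi\vee\psi\equiv\mathcal H x(\varphi\vee\psi)$ (rule~\ref{scopeQ}), which hold for $\mathcal H\in\{Q,\dual Q,\forall,\exists\}$ and rely on non-triviality of $Q$. So the disjunction case is not the main obstacle; the genuine bookkeeping lies in Step~4, i.e.\ pushing each $\exists$ past $Q$ and $\forall$ and recording exactly the right dependency tuple, which you correctly flag as a subtlety.
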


In the present paper a similar normal form for all $\df(Q,\dual Q)$ formulas is obtained in
Proposition \ref{prop:nf}. 

A \emph{weak semantics} can be given for $\df(Q)$ (and $\FO(Q)$, etc) by regarding $Q$ as an
interpreted symbol rather than a logical constant in the following way (see
\cite{keisler1970logic} and \cite{kaufmann1985quantifier} for more on this). A weak model is a
structure together with an interpretation of $Q$, often denoted by $q$. We define $T \models_w
\sigma$ to hold if every weak model $(M,q)$ of $T$ satisfies $\sigma$. In this paper we require the
interpretation $q$ of $Q$ to be monotone increasing and non-trivial (In essence this is the
\emph{monotone logic} of \cite{makowsky1977some}). In the weak semantics for $\df(Q,\dual Q)$ we
require that the interpretation $\dual q$ of $\dual Q$ is the dual of the
interpretation $q$ of $Q$. Thus, if $T \cup \set{\sigma}$ consists of  $\df(Q,\dual Q)$ sentences, then
$T \models_w \sigma$ if every model $(M,q,\dual q)$ of $T$ satisfies $\sigma$.

\section{Natural deduction for $\df(Q,\dual Q)$}\label{Ded}

In this section we present a set of natural deduction rules for the logic $\df(Q,\dual Q)$, where $Q$
is monotone and satisfies the non-triviality conditions: $(M,\emptyset)\notin Q$ and  $(M,M^k)\in
Q$ for all $M$. 
Observe that then also the dual quantifier $\dual Q$ satisfies these conditions. 
To simplify notation, we will restrict attention to type $\langle 1\rangle$ quantifiers. 

We use an abbreviation $\vec{x}=\vec{y}$ for the formula  $\bigwedge_{1\le i \le \len(\vec{x})} x_i=y_i$,
assuming of course  that  $\vec{x}$ and $\vec{y}$ are tuples  of the same length $\len(\vec{x})$.
The  substitution  of a term $t$ to the free occurrences of $x$ in $\psi$ is denoted by
$\psi[t/x]$. Analogously to first order logic,  no variable of $t$ may become bound in such a substitution.  For tuples  $\bar{t}=(t_1,\ldots,t_n)$ and $\bar{x}=(x_1,\ldots,x_n)$ we write   
$\psi[\bar{t}/\bar{x}]$ to denote the simultaneous substitution of $x_i$ by $t_i$ for $1\le i \le n$. 

Here is a list of all the rules:

\begin{enumerate}
	\item Conjunction:$$
		\infer[{\mbox{$\wedge$I}}]{\phi \wedge \psi}{\phi & \psi}
\qquad \infer[{\mbox{$\wedge$E}}]\phi{\phi\wedge \psi}
\qquad
\infer[{\mbox{$\wedge$E}}]\psi{\phi\wedge \psi}$$

\item Disjunction:
$$
\infer[{\mbox{$\vee$I}}]{\phi \vee \psi}{\phi}
\qquad
\infer[{\mbox{$\vee$I}}]{\phi \vee \psi}{\psi}
\qquad
\infer[{\mbox{$\vee$E}}]{\gamma}{
       \phi\vee \psi
       &
                                   \infer*{\gamma}{
               [\phi]
       }
       &
      \infer*{\gamma}{
               [\psi]
       }
       }
$$
where $\gamma$ is a $\FO(Q,\dual Q)$ formula.

\item\label{rule:neg} Negation and duality:
$$
\infer[{\mbox{ $\lnot$I}}]{\lnot\phi}{
     \infer*{\bot}{
               [\phi]
       }
       }
 \qquad
\infer[{\mbox{RAA}}]{\phi}{
      \infer*{\bot}{
               [\lnot \phi]
       }
       }
			 \qquad \infer[{\mbox{ $\bot$I}}]{\bot}{\phi & \lnot \phi}
\qquad
\infer{\lnot Qx \lnot \phi}{\dual Qx\phi}
 $$
where $\phi$ is a $\FO(Q,\dual Q)$ formula.

\item Universal quantifier:
$$
\infer[{\mbox{$\forall$I}}]{\forall x_i\phi}{ \phi }
\qquad
\infer[{\mbox{$\forall$E}}]{\phi[t/x_i]}{\forall x_i \phi}
$$
In $\forall$I the variable \(x_i\) cannot appear free  in any non-discharged assumption
used in the derivation of $\phi$.

\item Existential quantifier:
$$
\infer[{\mbox{$\exists$I}}]{\exists x_i\phi}{
	\phi(t/x_i)
       }
\qquad
\infer[{\mbox{$\exists$E}}]{\psi}{
       \exists x_i\phi
       &
                                   \infer*{\psi}{
               [\phi]
       }
       }$$
In $\exists$E the variable \(x_i\) cannot appear free  in \(\psi\) and in any non-discharged assumption
used in the derivation of  \(\psi\), except in \(\phi\).

\item\label{rule1} Disjunction substitution: 
\[
\infer[]{\phi\vee \gamma}{
        \phi\vee \psi
       &
                                   \infer*{\gamma}{
               [\psi]
       }
       }\]

\item\label{rule2}  Commutation and associativity of disjunction: 
\[\infer{\phi\vee \psi}{\psi\vee \phi}\hspace{19mm}\infer{\phi\vee(\psi\vee \gamma)}{(\phi\vee
\psi)\vee \gamma}\]

\item\label{scopeQ}  Extending scope:
\[\infer{ \mathcal{H} x (\phi \vee \psi)}{\mathcal{H} x \phi \vee \psi}\hspace{14mm} \infer{ Q x (\phi \wedge \psi)}{Q
x \phi \wedge \psi}  \]
 where $\mathcal{H}\in \{Q,\dual Q, \exists, \forall\}$, and  the  prerequisite for applying these rules is that $x$ does not appear free in $\psi$.
The rule on the right is also assumed for $\dual Q$.

\item\label{rule5} Unnesting:
\[\infer{\exists z( \dep(t_1,...,z,...,t_n)\wedge z=t_i)}{\dep(t_1,...,t_n)}   \]
 where $z$ is a new variable.

\item\label{rule6}  Dependence distribution: let  
\begin{eqnarray*}
\phi &=& \exists y_1\ldots \exists y_n(\bigwedge_{1\le j\le n}\dep(\vec{z}^j,y_j)\wedge \phi_0),\\
\psi &=& \exists y_{n+1}\ldots \exists y_{m}(\bigwedge_{n+1\le j\le
m}\dep(\vec{z}^j,y_j)\wedge \psi_0).
\end{eqnarray*}
where $\phi_0$ and $\psi_0$ are quantifier-free formulas without dependence atoms, and $y_i$, for
$1\le i \le n$, does not appear in $\psi$ and $y_i$, for $n+1\le i \le m$, does not appear in
$\phi$.
Then,
\[\infer{\exists y_1 \ldots \exists y_{m}(\bigwedge_{1\le j\le m}\dep(\vec{z}^j,y_j)\wedge
(\phi_0\vee \psi_0))}{\phi\vee \psi}   \]

\item\label{rule:depi} 
Dependence introduction:
\[\infer{\forall y\exists x (\dep(\vec{z},x)\wedge \phi)}{\exists x \forall y \phi}\hspace{14mm}
\infer{Q y\exists x (\dep(\vec{z},x)\wedge \phi)}{\exists x Q y \phi}  \]
 where $\vec{z}$ lists the variables in $\FV(\phi)- \{x,y\}$. Similar for $\dual Q$.

\item \label{rule:mon} Monotonicity of $Q$ and $\dual Q$: 
\[
\infer[]{Qx\psi}{
        Qx\phi
       &
                                   \infer*{\psi}{
               [\phi]
       }
       }\]
where the  prerequisite for applying this rule is that the variable $x$ cannot appear free  in any non-discharged assumption
used in the derivation of $\psi$, except for $\phi$. 
Similar for $\dual Q$.

\item\label{rule:bound} Bound variables: 
	$$	\infer{Qy\phi[y/x]}{Qx\phi},$$
where $y$ does not appear in $\phi$.	 
Similar for $\dual Q$. 

\item Identity rules:
	$$ \infer{t=t}{} \qquad
	\infer{\phi[t/x]}{\phi[r/x] & t=r}$$
	where $\phi$ is an $\FO(Q,\dual Q)$ formula. 

\end{enumerate}

Observe that $\FO(Q,\dual Q) \equiv \FO(Q)$, but syntactically $\FO(Q,\dual Q)$ includes more formulas. 

\subsection{Soundness of the rules}

In this section we show the soundness of the rules introduced in the previous section under any monotone and non-trivial
interpretation of $Q$.  
Clearly this is the same as
soundness in the weak
semantics for $Q$. 

The following lemmas will be needed in the proof.

\begin{lemma}\label{terms} Let $\phi(x)$ be a $\df(Q,\dual Q)$ formula, and $t$ a term such that
	in the substitution $\phi[t/x]$ no variable of $t$ becomes bound. Then for all $\M$ and teams
	$X$, where $(\FV(\phi)-\{x\})\cup \mathrm{Var}(t)\subseteq \dom(X)$\[  \M,X\models  \phi[t/x]
	\Leftrightarrow   \M,X[F/x]\models \phi(x),  \]
where $F\colon X\rightarrow A$ is defined by  $F(s)= t^{\M,s}$.
\end{lemma}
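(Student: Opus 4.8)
The plan is to prove the equivalence by induction on the structure of the $\df(Q,\dual Q)$ formula $\phi$, the central object being the team $X[F/x]=\set{s[F(s)/x] \mid s\in X}$ together with the map $\pi\colon s\mapsto s[F(s)/x]$ from $X$ to $X[F/x]$. Two features of $\pi$ drive the whole argument: first, since $\mathrm{Var}(t)\subseteq\dom(X)$ the value $F(s)=t^{\M,s}$ is determined by $s$ alone, and whenever $y\notin\mathrm{Var}(t)$ we have $t^{\M,s[a/y]}=t^{\M,s}$, so updating a coordinate $y\ne x$ does not disturb $F$; secondly, $\pi$ need not be injective, which is exactly where ``for all $s\in X$'' and ``for all $u\in X[F/x]$'' can fail to coincide. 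Before the induction I would dispose of the degenerate case in which the outermost bound variable equals $x$ (or more generally $x\notin\FV(\phi)$): then $\phi[t/x]=\phi$ and $X$, $X[F/x]$ agree on $\FV(\phi)$, so locality (Proposition \ref{prop:4}) gives the claim at once. Hence in every quantifier case I may assume the bound variable $y$ satisfies $y\ne x$ and, by the no-capture hypothesis, $y\notin\mathrm{Var}(t)$; the domain conditions needed to apply the induction hypothesis to the enlarged teams are routinely maintained.

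For the base cases: if $\phi$ has no dependence atom it is an $\FO(Q,\dual Q)$ formula, and the flatness clause reduces $\M,X\models\phi[t/x]$ to ``$\M,s\models\phi[t/x]$ for all $s\in X$''; the classical substitution lemma for $\FO(Q,\dual Q)$ (an easy induction whose generalized-quantifier step uses $t^{\M,s[\bar a/\bar z]}=t^{\M,s}$) turns this into ``$\M,s[F(s)/x]\models\phi$ for all $s\in X$'', i.e.\ $\M,X[F/x]\models\phi$. For a dependence atom $\phi=\dep(t_1,\dots,t_{n+1})$ I would use the term-level identity $(t_i[t/x])^{\M,s}=t_i^{\M,s[F(s)/x]}$ to rewrite the defining condition of $\M,X\models\dep(t_1[t/x],\dots,t_{n+1}[t/x])$ as the same condition over the assignments $s[F(s)/x]$, which is precisely $\M,X[F/x]\models\dep(t_1,\dots,t_{n+1})$; here quantification over all pairs absorbs any collapsing of $\pi$.

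In the inductive step, $\wedge$ is immediate. For $\vee$ I would use $X[F/x]=Y[F/x]\cup Z[F/x]$ whenever $X=Y\cup Z$: the forward direction transports a splitting of $X$ through $\pi$, and the backward direction pulls a splitting $U\cup V$ of $X[F/x]$ back to $Y=\pi^{-1}[U]$, $Z=\pi^{-1}[V]$, applying downward closure (Proposition \ref{prop:4}) to $Y[F/x]\subseteq U$ and $Z[F/x]\subseteq V$ before invoking the induction hypothesis. The universal case is cleanest: since $y\ne x$ the updates commute, $s[a/y][F(s)/x]=s[F(s)/x][a/y]$, and since $y\notin\mathrm{Var}(t)$ the function $F$ is unchanged on $X[M/y]$, giving the exact equality $(X[M/y])[F/x]=(X[F/x])[M/y]$, whence the claim follows with no side condition.

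The main obstacle is concentrated in $\exists y$, $Qy$ and $\dual Qy$, which I would treat uniformly. In each, the same commutation and $F$-invariance computations give $(X[g/y])[F/x]=\set{s[F(s)/x][g(s)/y]\mid s\in X}$ for a choice function $g$ (resp.\ the analogous identity with $G\colon X\to Q_M$). The backward direction is exact: given a witness on $X[F/x]$, defining the witness on $X$ by precomposition with $\pi$ yields literally the same team after the updates, so the induction hypothesis applies. The forward direction is where noninjectivity of $\pi$ bites: a witness $g$ (resp.\ $G$) on $X$ produces, through $\pi$, only a \emph{subteam} of $(X[F/x])[h/y]$, where $h$ is the induced witness obtained by choosing one $\pi$-preimage of each $v\in X[F/x]$; the team one thereby realizes on $X[F/x]$ is contained in, but possibly smaller than, the transported satisfying team. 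I would close this gap by downward closure (Proposition \ref{prop:4}); for the generalized quantifiers one may alternatively set $G'(v)=\bigcup_{s\,:\,\pi(s)=v}G(s)$, which lands back in $Q_M$ by monotonicity of $Q$ and restores an exact equality. Finally, since $\dual Q$ is again monotone increasing and nontrivial, the $\dual Qy$ case is identical to the $Qy$ case, so no separate argument is needed.
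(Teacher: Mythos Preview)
Your proof is correct and follows the expected structural induction; the paper itself gives no argument beyond citing the analogous Lemma~8 in \cite{Kontinen:2012} for plain dependence logic. The only content not already covered by that citation is the $Qy$ (and $\dual Qy$) step, and your treatment---precomposition with $\pi$ for the backward direction, and downward closure (or, equivalently, monotonicity of $Q$ via $G'(v)=\bigcup_{\pi(s)=v}G(s)$) for the forward direction---is exactly what is needed.
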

\begin{proof} Analogous to Lemma 8 in   \cite{Kontinen:2012}.
\end{proof}

It is easy to verify that Lemma \ref{terms} gives the following familiar property concerning changing free variables. 

\begin{lemma}[Change of free variables]\label{freevariables}
Let the free variables of $\phi\in  \df(Q,\dual Q)$  be $x_1,\ldots,x_n$ and let $y_1,\ldots,y_n$ be distinct variables. Then for all structures $\M$ and teams $X$ with domain $\{x_1,\ldots,x_n\}$ it holds that 
\[ \M,X\models \phi \Leftrightarrow \M,X'\models \phi[\bar{y}/\bar{x}],   \] 
where $X'$ is the team with domain  $\{y_1,\ldots,y_n\}$  containing the assignments $s'\colon y_i\mapsto s(x_i)$ for $s\in X$.
\end{lemma}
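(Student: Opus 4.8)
The plan is to derive the change-of-free-variables property directly from Lemma~\ref{terms} by performing a sequence of single-variable substitutions, one for each $x_i$, using fresh intermediate variables so that no substitution ever captures a variable that a later substitution still needs. The statement to prove is that for $\phi$ with free variables $x_1,\ldots,x_n$ and distinct new variables $y_1,\ldots,y_n$, and for any team $X$ with domain $\{x_1,\ldots,x_n\}$, one has $\M,X\models\phi$ iff $\M,X'\models\phi[\bar y/\bar x]$, where $X'=\{s'\colon y_i\mapsto s(x_i)\mid s\in X\}$.

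First I would handle the case $n=1$, which is essentially immediate: by Lemma~\ref{terms} applied with the term $t=y_1$ (a variable, so no variable of $t$ becomes bound as long as $y_1$ is genuinely new and does not occur in $\phi$), we get $\M,Z\models\phi[y_1/x_1]$ iff $\M,Z[F/x_1]\models\phi$, where $F(s)=y_1^{\M,s}=s(y_1)$. Taking $Z=X'$, the team $X'[F/x_1]$ assigns $x_1\mapsto s(y_1)=s(x_1)$ and $y_1\mapsto s(y_1)$, which restricted to $\FV(\phi)=\{x_1\}$ is exactly $X$ by locality (Proposition~\ref{prop:4}\eqref{en2}); so the base case follows. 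For the general case I would iterate this, renaming $x_1$ to $y_1$, then $x_2$ to $y_2$, and so on. Since the $y_i$ are new and distinct, each single renaming is an instance of Lemma~\ref{terms}, and composing the team operations $X\mapsto X'$ step by step yields precisely the simultaneous renaming. At each stage I must invoke locality to discard the now-irrelevant auxiliary coordinate and keep the domain under control.

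The main obstacle is purely bookkeeping: one must be careful that the single-variable substitutions, when chained, compose to the \emph{simultaneous} substitution $\phi[\bar y/\bar x]$ rather than to some sequentially-applied substitution, and that no capture occurs at any intermediate step. Because all target variables $y_1,\ldots,y_n$ are new (do not occur in $\phi$) and distinct, and because we never reuse a source variable as a target, the order of the renamings does not matter and no $y_j$ can be inadvertently captured or consumed by a later step; hence the sequential composition agrees with the simultaneous substitution. I expect this is exactly the routine verification the authors elide, which is why the lemma is stated as an easy consequence of Lemma~\ref{terms}.

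The cleanest write-up would simply say: apply Lemma~\ref{terms} $n$ times, once for each variable, using the fact that the $y_i$ are new and distinct to guarantee the no-capture hypothesis of Lemma~\ref{terms} holds at each step, and invoke locality (Proposition~\ref{prop:4}\eqref{en2}) after each step to restrict the team to the free variables of the current formula. This reduces the proof to checking that the resulting team transformations match the definition of $X'$, which is the asserted elementary computation.
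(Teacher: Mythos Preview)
Your proposal is correct and follows the same approach as the paper, which simply remarks that the lemma is an easy consequence of Lemma~\ref{terms}. You have spelled out the iterated single-variable application of Lemma~\ref{terms} together with locality, which is exactly the routine verification the paper omits; the only minor caveat is that the lemma as stated does not require the $y_i$ to be disjoint from the $x_j$, but this is handled by your suggestion of passing through fresh intermediate variables.
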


\begin{proposition}\label{soundness} Assume that $Q$ is monotone and non-trivial. 
	Let $T\cup\{\phi\}$ be a set of sentences of $\df(Q,\dual Q)$.  If $T\vdash \phi$, then $T
	\models \phi$.
\end{proposition}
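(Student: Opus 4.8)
The plan is to prove soundness by induction on the length of the derivation $T \vdash \phi$, establishing the stronger statement that holds at the level of teams rather than just sentences. That is, for each rule I would show that whenever all the premises are satisfied by a model $(M,q,\check q)$ and a team $X$ (with the appropriate free variables in $\dom(X)$), then so is the conclusion. Since soundness under any monotone non-trivial interpretation coincides with soundness in the weak semantics, I may fix an arbitrary such interpretation throughout and reason semantically. The base case covers the assumptions and the identity axiom $t=t$, both immediate from the semantics.

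\smallskip

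For the inductive step I would go rule by rule, grouping them by character. The purely first-order connective and quantifier rules ($\wedge$I/E, $\vee$I, $\forall$I/E, $\exists$I/E, and the identity substitution rule) are verified exactly as in the completeness treatment of dependence logic, invoking Lemma \ref{terms} for the substitution steps in $\forall$E and $\exists$I, and using locality (Proposition \ref{prop:4}\eqref{en2}) and the variable-freeness side conditions for $\forall$I and $\exists$E. The rules specific to the team semantics---disjunction substitution (\ref{rule1}), commutation/associativity (\ref{rule2}), extending scope (\ref{scopeQ}), unnesting (\ref{rule5}), dependence distribution (\ref{rule6}), dependence introduction (\ref{rule:depi}), and bound variables (\ref{rule:bound})---each require unwinding the semantic clauses for $\vee$ (splitting $X = Y \cup Z$), for the quantifiers ($X[F/\bar x]$, $X[M/y]$), and for the dependence atom, and then exhibiting the witnessing sets, functions, or team-splittings for the conclusion from those for the premise. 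For dependence distribution the point is that one may take the union of the two witnessing splittings and, using that the respective $y_i$ do not occur in the other formula, combine the Skolem functions into functions witnessing the joined block of dependence atoms. Dependence introduction uses that a single global witness for $\exists x$ (independent of $y$) certifies $\dep(\bar z,x)$.

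\smallskip

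The rules for negation and duality (\ref{rule:neg}) and the monotonicity rule (\ref{rule:mon}) are where the interpretation of $Q$ and $\check Q$ genuinely enters. For $\lnot$I, RAA, and $\bot$I, the side condition restricts $\phi$ to be an $\FO(Q,\check Q)$ formula, so over any team these reduce to ordinary satisfaction at each assignment and the classical soundness of these rules applies; the rule $\check Q x\,\phi \vdash \lnot Q x\,\lnot\phi$ is exactly the definition of $\check q$ as the dual of $q$. The monotonicity rule $Qx\,\phi, [\phi]\vdash\psi \Rightarrow Qx\,\psi$ is verified by taking the witnessing $F\colon X \to q$ for $Qx\,\phi$, applying the inductive hypothesis to the subderivation of $\psi$ from $\phi$ on the team $X[F/x]$, and then using monotone increasingness of $q$ (each superset of a set in $q_M$ is again in $q_M$) to conclude; the side condition that $x$ is not free in the undischarged assumptions is what makes the same $F$ usable.

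\smallskip

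I expect the main obstacle to be the bookkeeping in the team-semantic rules---precisely which team the inductive hypothesis is applied to, and verifying that the free-variable side conditions are exactly what is needed to push the witnesses through---rather than any single deep argument. In particular, getting the semantic invariant stated strongly enough to survive the hypothetical subderivations in $\vee$E, disjunction substitution (\ref{rule1}), and monotonicity (\ref{rule:mon}) is the delicate part: the induction must carry open assumptions, so the invariant should say that if every undischarged assumption is satisfied by $(M,q,\check q,X)$ then the conclusion is too. Once that formulation is fixed, the monotonicity rule is the only place requiring the quantifier property of $q$, and it is the step I would write out most carefully.
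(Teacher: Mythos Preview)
Your proposal is correct and follows essentially the same approach as the paper: strengthen the statement to teams and open assumptions, induct on the derivation, and treat the rules that do not involve $Q$ by the same arguments as in the dependence-logic setting, reserving detailed verification for the $Q$-specific rules (duality, scope extension, dependence introduction, monotonicity, bound variable).

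One minor correction: in rule~\ref{rule:mon} you do \emph{not} need upward closure of $q$ to conclude. Once the inductive hypothesis gives $\M,X[F/x]\models\psi$ with the very same $F:X\to Q_M$ witnessing $Qx\,\phi$, the semantic clause for $Q$ yields $\M,X\models Qx\,\psi$ directly; no superset is taken. Conversely, monotonicity of $q$ is used implicitly elsewhere, namely wherever you rely on the coincidence of team and Tarski semantics for $\FO(Q,\dual Q)$ formulas (the negation/duality rules and $\vee$E). That coincidence is exactly where upward closure of $q$ enters, so your remark that rule~\ref{rule:mon} is ``the only place requiring the quantifier property of $q$'' should be adjusted.
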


\begin{proof}
	We prove the statement that if $T \vdash \phi$, where $T \cup \set{\phi}$ is a set
	of \emph{formulas}, then for any $\M$ and $X$ where $\dom(X) \supseteq \FV(T) \cup \FV(\phi)$, if $\M,X
	\models T$ then $\M,X \models \phi$. This is done by using induction on the length of derivation. 

It suffices to
consider the rules  \ref{rule:neg} (only duality), \ref{scopeQ}, \ref{rule:depi}, \ref{rule:mon}, and \ref{rule:bound} since the soundness of 
the other rules can be  proved analogously to  \cite{Kontinen:2012} using the fact that
$\df(Q,\dual Q)$ is local and has downwards closure (see (ii) and (i) of  Proposition \ref{prop:4}). In particular,  
 Lemma \ref{terms} is used in the soundness proofs of the rules $\exists$ I and $\forall$ E.

\begin{itemize}
\item[\eqref{rule:neg}] Assume $\M,X \models \dual Q x \phi$ then, since $\dual Qx\phi$ is a $\FO(Q,\dual Q)$ formula we
	have $\M,s \models \dual Qx \phi$ for all $s \in X$. This clearly implies that $\M,s \models \lnot
Qx \lnot \phi$ for all $s \in X$, which is equivalent to $\M,X \models \lnot Qx \lnot \phi$.


\item[\eqref{scopeQ}] These rules preserve logical equivalence analogously to  Lemma  3.2 in \cite{Engstrom.Kontinen:2013}.

\item[\eqref{rule:depi}] The soundness of this rule follows from the logical equivalence 
\[Q y\exists x (\dep(\vec{z},x)\wedge \phi)\equiv \exists x Q y \phi\]
the proof of which is analogous to the case where $Q$ is replaced by $\forall$  (see \cite{Kontinen:2012}).

\item[\eqref{rule:mon}]  Assume that we have a natural deduction proof of $Qx\psi$ from the assumptions 
\begin{equation*}
 \{\gamma_1,\ldots,\gamma_k\}
\end{equation*}
with the last rule 13.  Let $\M$ and $X$ be such that  $\M,X\models \phi_i$, for $1\le i\le k$. By
the assumption, we have a shorter deduction of  $Qx\phi$ from the assumptions $
\{\gamma_{n_1},\ldots,\gamma_{n_l}\}$ and a deduction of $\psi$ from 
the assumptions $ \{\phi,\gamma_{n_{l+1}},\ldots,\gamma_{n_m}\}$. Hence by the induction assumption it holds that  
$\M,X\models Qx\phi$. Therefore, there is $F\colon X\rightarrow Q_M$ such that $\M, X[F/x]\models \phi$. Since the variable
$x$ cannot appear free in the formulas $\gamma_{n_{l+1}}, \ldots, \gamma_{n_m}$ it follows that
$\M, X[F/x]\models \gamma_i$, for $i \in \set{n_{l+1},\ldots,n_m}$.    Now by the induction assumption we get that 
 $\M, X[F/x]\models \psi$ and   $\M, X\models Qx\psi$.

\item[\eqref{rule:bound}] This rule preserves logical equivalence by Lemma \ref{freevariables}. \qedhere
 \end{itemize}
\end{proof}

Note that since Proposition \ref{soundness} holds for every monotone non-trivial quantifier $Q$ we
get also soundness for weak semantics: If $T \vdash \phi$ then $T \models_w \phi$. 

\section{Completeness results for $\FO(Q,\dual Q)$ consequences}\label{Comp}

\subsection{Deriving a normal form for $\df(Q,\dual Q)$} 
In this section we show that from each formula $\phi \in\df(Q,\dual Q)$ we can derive a logically equivalent formula in the following normal form:
\begin{equation}\label{NF}
\mathcal{H}^1 x_1\ldots \mathcal{H}^m x_m \exists y_1\ldots \exists y_n \bigl(\bigwedge _{1\le 
	j\le
n}\dep(\overline{x}^i,y_i)\wedge\theta\bigr),
\end{equation}
where $\mathcal{H}^i$ is either $Q$, $\dual Q$ or $\forall$, and $\theta$ is a quantifier-free $\FO$-formula.

\begin{proposition}\label{prop:nf}Let $\phi$ be a formula of $\df(Q,\dual Q)$. Then $\phi\vdash \phi'$, where $\phi'$ is of the form \eqref{NF}, and $\phi'$ is logically equivalent to $\phi$. 
\end{proposition}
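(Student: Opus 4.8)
The plan is to prove the normal form by structural induction on the formula $\phi$, systematically pushing quantifiers outward and collecting the dependence atoms into a single block governed by existential quantifiers. The available deduction rules have clearly been designed to make exactly this possible, so each inductive step amounts to identifying which rules to invoke.

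\medskip

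\noindent\textbf{Approach and base cases.}
First I would fix the target shape \eqref{NF} and treat the atomic cases. For a first-order literal or an $\FO(Q,\dual Q)$ formula $\theta$, it is already quantifier-free (or can be counted as the matrix with empty quantifier prefix and no dependence atoms), so nothing is to be done. The only genuinely atomic case needing work is a dependence atom $\dep(t_1,\dots,t_n)$: here I would use the \emph{Unnesting} rule \eqref{rule5} to rewrite it as $\exists z(\dep(t_1,\dots,z,\dots,t_n)\wedge z=t_i)$, bringing it into the required form with a leading $\exists$, a dependence atom, and a quantifier-free first-order matrix $z=t_i$.

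\medskip

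\noindent\textbf{Inductive steps.}
For the induction I assume $\phi$ and $\psi$ have already been derived into normal form, say
\[
\phi \;=\; \H^1\bar x_1\cdots \H^p\bar x_p\,\exists\bar u\,\Bigl(\bigwedge_j\dep(\bar z^j,u_j)\wedge\phi_0\Bigr),
\]
and analogously for $\psi$; after a preliminary renaming via the \emph{Bound variables} rule \eqref{rule:bound} and Lemma \ref{freevariables} I may assume all quantified variables are distinct between the two. For a \emph{conjunction} $\phi\wedge\psi$, for a \emph{disjunction} $\phi\vee\psi$, and for the \emph{quantifier cases} $\exists x\phi$, $\forall x\phi$, $Qx\phi$, $\dual Qx\phi$, the strategy is the same: first pull the quantifier prefixes of $\phi$ (and $\psi$) out past the connective and past each other using \emph{Extending scope} \eqref{scopeQ}, \emph{Commutation and associativity} \eqref{rule2}, and \emph{Disjunction substitution} \eqref{rule1}; then coalesce the two dependence-atom blocks. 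The disjunction case is the delicate one: after merging the prefixes one is left with a disjunction of two $\exists$-prefixed dependence blocks, and the \emph{Dependence distribution} rule \eqref{rule6} is precisely what converts $\phi\vee\psi$ (with $\phi,\psi$ in the displayed $\exists\bigwedge\dep\wedge(\cdot)$ shape) into a single block $\exists y_1\cdots\exists y_m(\bigwedge_j\dep(\bar z^j,y_j)\wedge(\phi_0\vee\psi_0))$. The quantifier cases $Qx$, $\dual Qx$, $\forall x$ additionally need \emph{Dependence introduction} \eqref{rule:depi} to commute an inner $\exists$ past the newly prefixed quantifier while paying the cost of a fresh dependence atom; the $\exists x$ case only needs to prepend $\exists x$ and absorb it into the $\exists$-block. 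Throughout, the side conditions on the rules (the bound variable $x$ not occurring free in the formula being passed over) are guaranteed by the renaming step.

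\medskip

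\noindent\textbf{Main obstacle.}
The hard part will be the disjunction (and, by extension, the connective-plus-mixed-prefix) case: the rules \eqref{scopeQ}, \eqref{rule1}, and \eqref{rule2} only let me move a quantifier outward when its bound variable is absent from the other disjunct, so I must be careful to interleave the renaming, the prefix extraction, and the final application of \eqref{rule6} in the correct order, verifying at each step that the variable-freeness side conditions hold. I would also need to confirm that each rule I invoke preserves logical equivalence (not merely derivability), so that the resulting $\phi'$ is genuinely equivalent to $\phi$ as claimed — this follows from the soundness established in Proposition \ref{soundness} together with the observation that the relevant rules are reversible, but it should be stated explicitly.
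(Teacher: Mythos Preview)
Your approach is essentially the paper's: both arguments use structural induction and the same stock of rules (scope extension \eqref{scopeQ}, dependence distribution \eqref{rule6}, dependence introduction \eqref{rule:depi}) to reach the normal form. The paper organizes the work into four explicit phases---first prenex, then collect dependence atoms in the matrix via \eqref{rule6}, then repair the prefix via \eqref{rule:depi}---whereas you fold everything into a single bottom-up pass, but the underlying moves coincide.

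There is, however, one concrete error in your quantifier cases: you have the role of Dependence introduction backward. In the inductive step for $Qx\phi$, $\dual Qx\phi$, or $\forall x\phi$ with $\phi$ already in normal form $\H^1x_1\cdots\H^mx_m\,\exists\bar y\,(\cdots)$, prepending the new quantifier yields $Qx\,\H^1\cdots\H^m\,\exists\bar y\,(\cdots)$, which is \emph{already} of shape \eqref{NF}; no commutation is needed. It is the case $\exists x\phi$ that requires real work: $\exists x\,\H^1\cdots\H^m\,\exists\bar y\,(\cdots)$ has the new $\exists x$ sitting outside the $\H$-block, and rule \eqref{rule:depi} is precisely what pushes it rightward past each $\H^i$ (at the cost of a fresh dependence atom) until it reaches the $\exists$-block. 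The paper isolates exactly this manoeuvre as its Step~4. Your claim that ``the $\exists x$ case only needs to prepend $\exists x$ and absorb it into the $\exists$-block'' overlooks the intervening $\H$-prefix of $\phi$, and without \eqref{rule:depi} there is no rule in the system that lets you move an outer $\exists$ past $Q$, $\dual Q$, or $\forall$.
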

\begin{proof}
The proof of this Proposition is analogous to the proof of the corresponding result for dependence logic formulas in  \cite{Kontinen:2012}. We will indicate how the proof of \cite{Kontinen:2012} can be extended for the formulas of  $\df(Q,\dual Q)$.

We will establish the claim in several steps. Without loss of generality, we assume that in $\phi$ each variable is quantified only once and that, in  the dependence atoms of $\phi$, only variables (i.e. no complex terms) occur.  

\begin{itemize}
	\item[Step 1.] We derive from $\phi$ an equivalent sentence in prenex normal form:
\begin{equation}\label{2}
  \mathcal{H}^1 x_1\ldots \mathcal{H}^mx_m\chi,
\end{equation}
where $\mathcal{H}^i\in \{\exists,\forall, Q, \dual Q\}$ and $\chi$ is a quantifier-free formula. 

We will  prove the claim for every formula $\phi$  satisfying the assumptions made in the beginning of the proof and the assumption (if $\phi $ has free variables) that no variable appears both free and bound in $\phi$.  It suffices to consider the case $\phi:= \psi\vee \theta$, since the  case of conjunction is analogous and the other cases are trivial.

 By the induction assumption, we have derivations  $ \psi\vdash\psi^*$ and $\theta\vdash \theta^* $, where
\begin{eqnarray*}
\psi^* &=& \mathcal{H}^1 x_1\ldots \mathcal{H}^m x_m\psi_0,\\
\theta^* &= & \mathcal{H}^{m+1} x_{m+1}\ldots \mathcal{H}^{m+n}x_{m+n}\theta_0,
\end{eqnarray*}
 and  $\psi\equiv \psi^*$ and $\theta\equiv \theta^* $.  Now $\phi\vdash \psi^* \vee\theta^*$, using two 
applications of the rule \ref{rule1}. Next we prove using induction on 
$m$ that, from $\psi^* \vee\theta^*$, we can derive 
\begin{equation}\label{goalform}
 \mathcal{H}^1 x_1\ldots \mathcal{H}^mx_m\mathcal{H}^{m+1} x_{m+1}\ldots \mathcal{H}^{m+n}x_{m+n}(\psi_0\vee \theta_0).
\end{equation}
Let $m=0$. We prove this case again by induction; for $n=0$ the claim holds. Suppose that $n=l+1$. We assume that $\mathcal{H}^1=Q$. The  case   $\mathcal{H}^1=\dual Q$ is analogous, and the cases $\mathcal{H}^1\in \{\exists, \forall\}$ are handled exactly as in  \cite{Kontinen:2012}. The following deduction now shows the claim:
\begin{enumerate}
\item  $\psi_0 \vee Q x_{1}\ldots \mathcal{H}^{n}x_{n}\theta_0$ 
\item  $Q x_{1}\ldots \mathcal{H}^{n}x_{n}\theta_0\vee \psi_0 $ (rule \ref{rule2})
\item $Q x_{1}(\mathcal{H}^{2} x_{2}\ldots \mathcal{H}^{n}x_{n}\theta_0\vee \psi_0) $ (rule \ref{scopeQ})
\item $Qx_{1}\ldots \mathcal{H}^{n}x_{n}( \psi_0\vee \theta_0)$ (rule \ref{rule:mon}  and D1),  
\end{enumerate}
where D1 is the derivation 
\begin{enumerate}
\item $\mathcal{H}^{2} x_{2}\ldots \mathcal{H}^{n}x_{n}\theta_0\vee \psi_0$
\item      \hspace{1cm}     .
\item        \hspace{1cm}    .
\item       \hspace{1cm}     .
\item $ \mathcal{H}^{2} x_{2}\ldots \mathcal{H}^{n}x_{n}(\theta_0\vee \psi_0)$ (induction assumption)
\item      \hspace{1cm}     .
\item        \hspace{1cm}    .
\item       \hspace{1cm}     .
\item $ \mathcal{H}^{2} x_{2}\ldots \mathcal{H}^{n}x_{n}(\psi_0\vee \theta_0)$ (D2)
\end{enumerate}
where D2 is a derivation that swaps the disjuncts. This concludes the proof for the case $m=0$.

Assume then that $m=k+1$ and that the claim holds for $k$. Now the following derivation shows the claim. Again we consider only the case $\mathcal{H}^1=Q$.

\begin{enumerate}
\item  $Qx_1\mathcal{H}^2 x_2\ldots \mathcal{H}^mx_m\psi_0 \vee  \mathcal{H}^{m+1}x_{m+1}\ldots \mathcal{H}^{m+n}x_{m+n}\theta_0$
\item  $Qx_1(   \mathcal{H}^2 x_2\ldots \mathcal{H}^mx_m\psi_0 \vee  \mathcal{H}^{m+1}x_{m+1}\ldots \mathcal{H}^{m+n}x_{m+n}\theta_0)$ (rule \ref{scopeQ})
\item $Q x_1\ldots \mathcal{H}^mx_m\mathcal{H}^{m+1} x_{m+1}\ldots
	\mathcal{H}^{m+n}x_{m+n}(\psi_0\vee\theta_0)$ (rule \ref{rule:mon} and D3)
\end{enumerate}
where  D3 is the following derivation given by the induction assumption: 
\begin{enumerate}
\item  $\mathcal{H}^2 x_2\ldots \mathcal{H}^mx_m\psi_0 \vee  \mathcal{H}^{m+1}x_{m+1}\ldots \mathcal{H}^{m+n}x_{m+n}\theta_0$
\item      \hspace{1cm}      .
\item       \hspace{1cm}     .
\item         \hspace{1cm}   .
\item $ \mathcal{H}^2 x_2\ldots \mathcal{H}^mx_m\mathcal{H}^{m+1} x_{m+1}\ldots \mathcal{H}^{m+n}x_{m+n}(\psi_0\vee\theta_0)$ 
\end{enumerate}
This concludes the proof.

\item[Step 2.] The next step is to show that from a quantifier-free formula $\theta$ it is possible to derive an equivalent formula of the form: 
\begin{equation}\label{3}
\exists z_1\ldots \exists z_n(\bigwedge_{1\le j\le n}\dep(\vec{x}^j,z_j)\wedge \theta^*),   
\end{equation}
where $\theta^*$ is a quantifier-free formula without dependence atoms. Again the claim is proved  using induction on $\theta$ using in particular rule  \ref{rule6}.  Note that the quantifier $Q$ does not play any role in this step, hence the claim can be proved exactly as in  \cite{Kontinen:2012}. 

\item[Step 3.] The deductions  in Step 1 and 2 can be combined  (from $\phi$ to \eqref{2}, and then from $\theta$ to  \eqref{3}) to show that  
\begin{equation}\label{4}
\phi\vdash \mathcal{H}^1 x_1\ldots \mathcal{H}^mx_m \exists z_1\ldots \exists z_n(\bigwedge_{1\le j\le n}\dep(\vec{x}^j,z_j)\wedge \theta^*).   
\end{equation}
Note that for $\mathcal{H}_i=Q$, rule \ref{rule:mon} is needed in this deduction.

\item[Step 4.] We transform the $\mathcal{H}^i$-quantifier prefix in \eqref{4} to the required form (see \eqref{NF})  by using rule \ref{rule:depi} and pushing the new dependence atoms as new conjuncts to 
\begin{equation}\label{5}
\bigwedge_{1\le j\le n}\dep(\vec{x}^j,z_j).
\end{equation}

We prove the claim using induction on the length $m$ of the $\mathcal{H}$-quantifier block in \eqref{4}. For $m=0$ the claim holds. Suppose that the claim holds for $k$ and $m=k+1$. We consider first the case $\mathcal{H}^1=Q$.
The following derivation now shows the claim:
\begin{enumerate}
\item$ Q x_1 \mathcal{H}^2x_2\ldots \mathcal{H}^mx_m \exists z_1\ldots \exists z_n(\bigwedge_{1\le j\le n}\dep(\vec{x}^j,z_j)\wedge \theta^*)$ 
\item  $Q x_1\mathcal{H}^{i_1} x_{i_1}\cdots \mathcal{H}^{i_h}  x_{i_h}\exists \vec{x}'  \exists \vec{z} (\bigwedge_{1\le j\le n'}\dep(\vec{x}^j,w_j)\wedge \theta^*)$ (rule \ref{rule:mon} and D4)
\end{enumerate}
where $\mathcal{H}_{i_j}$, for $1\le j\le h$,  is either $Q$, $\dual Q$ or $\forall$, and  D4 is the following derivation that exists by the induction assumption:
\begin{enumerate}
\item $ \mathcal{H}^2 x_2\ldots \mathcal{H}^mx_m \exists z_1\ldots \exists z_n(\bigwedge_{1\le j\le n}\dep(\vec{x}^j,z_j)\wedge \theta^*)$ 
\item       \hspace{1cm}       .
\item       \hspace{1cm}       .
\item       \hspace{1cm}       .
\item $\mathcal{H}^{i_1} x_{i_1}\cdots \mathcal{H}^{i_h}  x_{i_h}\exists \vec{x}'  \exists \vec{z} (\bigwedge_{1\le j\le n'}\dep(\vec{x}^j,w_j)\wedge \theta^*)$
\end{enumerate}
The case $\mathcal{H}^1=\forall$ can be proved analogously.  Next we consider the case $\mathcal{H}^1=\exists $ and $\mathcal{H}^i=\exists $ for all $2\le i \le m$. In this case the quantifier  $\mathcal{H}^1$ is already in the right place in the quantifier prefix. We will record the variables determining 
$\mathcal{H}^1$  by a new dependence atom and then move it to the quantifier free part of the formula. This is done because
 each existentially quantified variable is determined by one and only one dependence atom in the normal form  \eqref{NF}. 
We will use the following auxiliary derivation D5:
\begin{enumerate}
\item $\exists x \chi(\vec{z})$
\item $\exists x\forall y \chi(\vec{z})$ ($\exists $ E and D6)
\item $\forall y\exists x(\dep(\overline{z},x)\wedge \chi(\vec{z}))$ (rule \ref{rule6}) 
\item $\exists x(\dep(\overline{z},x)\wedge \chi(\vec{z}))$ ($\forall $ E),
\end{enumerate}
where D6 refers to the following derivation
\begin{enumerate}
\item $\chi(\vec{z},x)$
\item $\forall y \chi(\vec{z},x)$ ($\forall$ I with $y$ a fresh variable)
\item $\exists x\forall y \chi(\vec{z})$ ($\exists$ I)  
\end{enumerate}
Let us now prove the case $\mathcal{H}^1=\exists$ with $\mathcal{H}^i=\exists $ for all $2\le i \le m$.
\begin{enumerate}
\item $\exists x_1 \exists x_2\ldots \exists x_m \exists z_1\ldots \exists z_n(\bigwedge_{1\le j\le n}\dep(\vec{x}^j,z_j)\wedge \theta^*)$ 
\item  $\exists x_1 \exists x_2\ldots \exists x_m \exists z_1\ldots \exists z_n( \bigwedge_{1\le j\le n+m-1}\dep(\vec{x}^j,w_j) \wedge \theta^*)$ ($\exists$ E and D7) 
\item $\exists x_1(\dep(\vec{x}^{n+m},x_1) \wedge \exists x_2\ldots \exists x_m \exists \vec{z}(\bigwedge_{1\le j\le n+m-1}\dep(\vec{x}^j,w_j)\wedge \theta^*))$ (D5) 
\item  $\exists x_1 \exists x_2\ldots \exists x_m \exists z_1\ldots \exists z_n(\bigwedge_{1\le j\le n+m}\dep(\vec{x}^j,w_j)\wedge \theta^*)$  (D8)
\end{enumerate}
where D7 is a derivation that exists by the induction assumption, and D8 is a derivation that pushes $\dep(\vec{x}^{n+m},x_1)$ into the quantifier free part of the formula. The case $\mathcal{H}^1=\exists $, where  
$\mathcal{H}^i=\forall$ for some $2\le i \le m$ can be proved similarly to  \cite{Kontinen:2012} adding one additional trasformation 
in which the redundant dependence atoms (created not by the first swap of $\mathcal{H}^1$ with $\forall$, $Q$, or $\dual Q$) 
are deleted from the formula using essentially the rule $\wedge$ E. 
\end{itemize}

Steps 1-4 show that from a formula $\phi$ a formula of the form can be deduced 
\begin{equation}\label{6}
\mathcal{H}^1 x_1\ldots \mathcal{H}^m x_m \exists y_1\ldots \exists y_n \bigl(\bigwedge _{1\le 
	j\le
n}\dep(\overline{x}^i,y_i)\wedge\theta\bigr),
\end{equation}
where $\mathcal{H}^i$ is either $Q$, $\dual Q$ or $\forall$ and $\theta$ is a quantifier-free $\FO$-formula.
 Furthermore, $\phi$ and the formula in \eqref{6} are logically equivalent since
 logical equivalence is  preserved in each of the Steps 1-4.
\end{proof}

\subsection{Completeness for $\df(Q,\dual Q)$}

In this section we prove a completeness result for $\df(Q,\dual Q)$ with respect to $\FO(Q,\dual Q)$
consequences of $\df(Q,\dual Q)$-sentences, with weak semantics. 
Analogously to \cite{Kontinen:2012}, we approximate  
$\df(Q,\dual Q)$-sentences in the normal form \eqref{NF} by an infinite set of $\FO(Q,\dual Q)$ sentences. We use an extra predicate $R$ to encode a team witnessing the satisfiability of the quantifier prefix  $\mathcal{H}^1 x_1\ldots \mathcal{H}^m x_m$.

Let $\sigma$ be 
$$
\H^1 x_1\ldots \H^m x_m \exists y_1\ldots \exists y_n \bigl(\bigwedge _{1\le i \le
n}\dep(\overline{x}^i,y_i)\wedge\theta(x_1,\ldots,x_m,y_1,\ldots,y_n)\bigr),
$$
where each $\H^i$ is either $Q$, $\dual Q$ or $\forall$. 

We define finite approximations $A^i\sigma$ of $\sigma$ as follows. The first approximation,
$A^1\sigma$, is
$$ \forall x_1 \ldots \forall x_m \exists y_1 \ldots \exists y_n \bigl(R(x_1,\ldots,x_m) \imp
\theta(x_1,\ldots,x_m,y_1,\ldots,y_n)\bigr),$$
or in compressed form:
$$  \forall \bar x \exists \bar y \bigl(R(\bar x) \imp \theta(\bar x, \bar y)\bigr).$$

The second approximation $A^2\sigma$ is 
\begin{multline*} \forall \bar x_1 \exists \bar y_1 \forall \bar x_2 \exists y_2 \bigl(
R(\bar x_1) \land R(\bar x_2) \imp 
\theta(\bar x_1,\bar y_1) \land  
\theta(\bar x_2,\bar y_2)  \land {} \\
\bigwedge_{1 < i < n} (\bar x_1^i=\bar x_2^i \imp y_{i,1}=y_{i,2}) 
\bigr)
\end{multline*}
With the notational convention that $(x_{i_1},\ldots,x_{i_k})_j$ is the sequence    $(x_{i_1,j},\ldots,x_{i_k,j})$. 
By generalizing this construction we get the $k$:th approximation: 
\begin{multline*}
	\forall \bar x_1 \exists \bar y_1 \ldots \forall \bar x_k \exists \bar y_k 
	\bigl( \bigwedge_{1 \leq j \leq k}R(\bar x_j) \imp 
	\bigwedge_{1 \leq j \leq k} \theta(\bar x_j,\bar y_j) \land {} \\  
	\bigwedge_{\substack{1 \leq i \leq n \\1 \leq j,j' \leq k}} (\bar x_j^i=\bar x_{j'}^i \imp y_{i,j}=y_{i,j'}) \bigr)
\end{multline*}

Also we need a sentence saying that $R$ is of the right kind, witnessing the quantifier prefix:
Let $B\sigma$ be $$\H^1 x_1\ldots \H^m
x_m R(x_1,\ldots,x_m).$$

We will adopt the following approximation rule in our deduction system:

\begin{prooftree}
\AxiomC{$\sigma$}
\AxiomC{$[B\sigma]$}
\noLine
\UnaryInfC{$\ddots$}
\AxiomC{$[A^n\sigma]$}
\noLine
\UnaryInfC{$\iddots$}
\noLine
\BinaryInfC{$\psi$}
\RightLabel{(Approx)}
\BinaryInfC{$\psi$}
\end{prooftree}
where $\sigma$ is a sentence in normal form,  and 
$R$ does not appear in $\psi$ nor in any uncancelled assumptions in the derivation of $\psi$, except for $B\sigma$ and $A^n\sigma$.

\begin{lemma} \label{lem:approxsound}
	Adding the approximation rule to the inference system results in a sound system for $\df(Q,\dual Q)$ with regard to weak semantics.
\end{lemma}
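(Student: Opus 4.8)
The goal is to show that the approximation rule is sound for weak semantics, i.e.\ that it preserves truth in every weak model $(M,q,\dual q)$. Since we already know (Proposition~\ref{soundness} and the remark following it) that the rest of the system is sound, it suffices to verify that whenever the premises of the approximation rule hold in a given $(M,q)$, so does the conclusion. The plan is to reduce this to a single semantic claim: for $\sigma$ in normal form,
\[
(M,q)\models\sigma \quad\Longleftrightarrow\quad \text{there is an interpretation $r$ of $R$ with }(M,q,r)\models B\sigma\wedge\bigwedge_{n\ge 1}A^n\sigma.
\]
Granting this equivalence, soundness of the rule follows by the usual ``existential elimination'' argument: suppose the left premise $\sigma$ holds and we have derivations of $\psi$ from $B\sigma$ and each $A^n\sigma$ with $R$ not occurring in $\psi$ nor in the other open assumptions. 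By the equivalence we may expand $(M,q)$ to some $(M,q,r)$ satisfying all the $A^n\sigma$ and $B\sigma$; the inductive hypothesis applied to the subderivations then gives $(M,q,r)\models\psi$, and since $R$ does not occur in $\psi$, this is already $(M,q)\models\psi$.

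First I would make precise how $R$ encodes the team witnessing the quantifier prefix. Spelling out the team semantics of $\sigma$, $(M,q)\models\sigma$ means there are Skolem-type functions producing, for the prefix $\H^1 x_1\ldots\H^m x_m$, a team $X$ on the variables $\bar x$ together with witnesses $y_1,\dots,y_n$ satisfying the dependence atoms $\dep(\bar x^i,y_i)$ and $\theta$. The forward direction of the equivalence is then to set $r$ to be exactly the set of prefix-tuples $\bar a$ appearing in this witnessing team $X$. One checks that $B\sigma$ holds because $r$ was constructed to witness the same prefix $\H^1x_1\ldots\H^m x_m R(\bar x)$ (here the monotonicity and non-triviality of $q,\dual q$ are what make the prefix condition transfer between the team and the relation $r$), and that each $A^n\sigma$ holds because the first-order statement $A^n\sigma$ merely asserts, over any $n$ prefix-tuples landing in $R$, the existence of witnesses satisfying $\theta$ and agreeing wherever the relevant $\bar x^i$-coordinates agree --- which is precisely the content of the dependence atoms together with $\theta$ read off from the single witnessing team.

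For the converse direction, given $r$ with $(M,q,r)\models B\sigma\wedge\bigwedge_n A^n\sigma$, I would reconstruct a witnessing team for $\sigma$ from the tuples in $r$. The role of the entire family $\{A^n\sigma\}_n$ rather than a single approximation is that the dependence atom $\dep(\bar x^i,y_i)$ is a \emph{global} constraint on the team: it requires $y_i$ to be a genuine function of $\bar x^i$ across \emph{all} assignments simultaneously, whereas each $A^n\sigma$ only controls $n$ tuples at a time. The argument here is a compactness-flavoured gluing: the functional-consistency clauses $\bigwedge(\bar x^i_j=\bar x^i_{j'}\imp y_{i,j}=y_{i,j'})$ in $A^n\sigma$, taken over all $n$, force the existential witnesses to cohere into well-defined functions of the $\bar x^i$, so that choosing one witness tuple per element of $r$ yields a team satisfying both the dependence atoms and $\theta$; that team then witnesses $\sigma$ via $B\sigma$.

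The main obstacle I expect is precisely this gluing step in the converse, namely passing from the per-approximation functional consistency to honest-to-goodness functions witnessing the dependence atoms over the possibly infinite relation $r$. One has to argue that the choices of witnesses $\bar y_j$ can be made uniformly rather than separately in each $A^n\sigma$; the consistency conditions guarantee that any two choices agreeing on $\bar x^i$ agree on $y_i$, so a single function exists by choosing, for each value of $\bar x^i$, a common witness. Handling the interaction of several dependence atoms with overlapping but distinct argument tuples $\bar x^i$ simultaneously, while keeping $\theta$ satisfied, is the delicate point, and I would structure the proof so that this is isolated into one lemma; the transfer of the prefix condition via monotonicity of $q$ and $\dual q$ in the $B\sigma$ step is the only other place where the hypotheses on the quantifier are genuinely used.
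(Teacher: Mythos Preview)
Your soundness argument --- the forward direction of your equivalence --- is correct and is exactly what the paper does: from $(M,q)\models\sigma$ one extracts the witnessing team for the prefix, takes $r$ to be the corresponding relation on $M^m$, checks that $(M,q,r)\models B\sigma$ and $(M,q,r)\models A^n\sigma$ (for the particular $n$ used in the derivation; the rule discharges a single $A^n\sigma$, not all of them), and then runs the derivation of $\psi$ in the expanded structure, using that $R$ does not occur in $\psi$ or the remaining open assumptions. That is the whole proof of the lemma.

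The problem is the second half of your plan. The converse implication
\[
\exists r\;\bigl((M,q,r)\models B\sigma\wedge\textstyle\bigwedge_{n\ge 1}A^n\sigma\bigr)\ \Longrightarrow\ (M,q)\models\sigma
\]
is \emph{not} valid in arbitrary weak models, and your ``compactness-flavoured gluing'' cannot succeed. Each $A^n\sigma$ only guarantees, for any $n$ tuples from $r$, the existence of \emph{some} coherent choice of witnesses; different finite sets may require incompatible choices, so no global functions witnessing the dependence atoms need exist. This is precisely the phenomenon behind Barwise's game sentences: the infinite conjunction of finite approximations is in general strictly weaker than the original sentence. In the paper this converse is isolated as Lemma~\ref{lemrec2}, proved only under the additional hypothesis that the model is countable and recursively saturated (invoking \cite{barwise1976some}), and it is used for the \emph{completeness} direction of Theorem~\ref{mainthm}, not for soundness.

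So: drop the converse entirely from this lemma. Your forward direction already suffices, and attempting the converse here would lead you to a false claim.
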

\begin{proof}
	We plug in the following induction step to the proof of Proposition \ref{soundness}:

	Assume that there is a derivation of $\psi$ from $\Gamma$ ending with
	the approximation rule. Then there are shorter derivations from $\Gamma$ of $\sigma$ and from
	$\Gamma',B\sigma,A^n\sigma$ of $\psi$, where $\Gamma' \subseteq \Gamma$ is such that $R$ does not occur in $\Gamma'$. By the induction hypothesis we get $\Gamma \models \sigma$
	and $\Gamma',B\sigma,A^n\sigma\models \psi$. We will prove that $\Gamma \models \psi$, by
	assuming $\M,X \models \Gamma$ for some non-empty $X$ and proving that $\M,X \models \psi$. 

	Assume $\sigma$ is of the form 
$$
\H^1 x_1\ldots \H^m x_m \exists y_1\ldots \exists y_n \bigl(\bigwedge _{1\le i \le
n}\dep(\overline{x}^i,y_i)\wedge\theta\bigr).
$$
where $\theta$ is a quantifier free first order formula.

From the fact that $\M,X \models \sigma$ we get $\M \models \sigma$ and thus there is a
(non-empty) team $Y$ such that 
$$\M,Y \models \exists y_1\ldots \exists y_n \bigl(\bigwedge _{1\le i \le
n}\dep(\overline{x}^i,y_i)\wedge\theta\bigr).$$
Let $r \subseteq M^m$ be the relation $Y(\bar x)$ corresponding to $Y$. Then $(\M,r)\models
B\sigma$ and, it should also be clear that $(\M,r) \models A^n\sigma$.
Since $R$ does not occur in $\Gamma'$ we have  $(\M,r),X \models \Gamma',B\sigma,A^n\sigma$. By the induction hypothesis $(\M,r),X \models
\psi$, and since $R$ does not occur in $\psi$ we have  $\M,X\models \psi$. 
\end{proof}

The main result of this section can now be stated as follows.
\begin{theorem}\label{mainthm} Let $T$ be a set of sentences of $\df(Q,\dual Q)$ and
	$\phi\in \FO(Q,\dual Q)$ a sentence. Then the following are equivalent:
\begin{description}
\item[(I)] $T\models_w\phi$
\item[(II)] $T\vdash\phi$
\end{description}
\end{theorem}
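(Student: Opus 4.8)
The implication (II)$\Rightarrow$(I) is soundness, already established by Proposition \ref{soundness} together with Lemma \ref{lem:approxsound}, so the real work is in proving (I)$\Rightarrow$(II). The plan is to reduce the problem to the completeness of ordinary $\FO(Q,\dual Q)$ under weak (monotone) semantics by replacing each premise with its finite $\FO(Q,\dual Q)$ approximations, and then to fold these approximations back using the rule (Approx).

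First I would put $T$ into normal form: by Proposition \ref{prop:nf}, for each $\sigma\in T$ there is a normal-form sentence $\sigma'\equiv\sigma$ with $\sigma\vdash\sigma'$. Replacing $T$ by $T'=\set{\sigma':\sigma\in T}$ changes neither the weak-semantics consequences nor, since $\sigma\vdash\sigma'$, derivability of $\phi$, so I may assume every member of $T$ is already in the form \eqref{NF}. For each such $\sigma$ I introduce a fresh relation symbol $R=R_\sigma$ and form the set of $\FO(Q,\dual Q)$ sentences $T^*=\bigcup_{\sigma\in T}\bigl(\set{B\sigma}\cup\set{A^n\sigma:n\ge 1}\bigr)$, where $B\sigma$ and $A^n\sigma$ are the approximations defined above.

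The heart of the argument is the semantic claim $T^*\models_w\phi$. To prove it, let $\mathcal N$ be a weak model of $T^*$; I must show that its reduct to the vocabulary of $T$ satisfies $\phi$. Since $T\models_w\phi$, it suffices to produce from $\mathcal N$ a weak model of $T$ agreeing with $\mathcal N$ on $\phi$, and I would do this by compactness. Expand the language by Skolem function symbols $f^\sigma_1,\dots,f^\sigma_{n_\sigma}$ for each $\sigma$ and consider the theory consisting of the elementary diagram of $\mathcal N$ together with, for every $\sigma$ and every tuple $\bar a$ with $\mathcal N\models R_\sigma(\bar a)$, the instance $\theta_\sigma\bigl(\bar a,f^\sigma_1(\bar a^1),\dots,f^\sigma_{n_\sigma}(\bar a^{n_\sigma})\bigr)$. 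Any finite fragment mentions finitely many $\sigma$ and finitely many tuples, so the relevant $A^n\sigma$ (which supplies a consistent choice of witnesses for any $n$ points of $R_\sigma$) makes it satisfiable; hence by compactness of the weak semantics (the monotone logic of \cite{makowsky1977some}) the whole theory has a model $\mathcal N'\succeq\mathcal N$. In $\mathcal N'$ the $f^\sigma_i$ are global Skolem functions: assigning the value $f^\sigma_i(\bar a^i)$ to $y_i$ makes each dependence atom $\dep(\bar x^i,y_i)$ hold, since the value depends only on $\bar x^i$, and makes $\theta_\sigma$ hold on the team coded by $R_\sigma$. Together with $B\sigma$, which guarantees that this team witnesses the prefix $\H^1x_1\dots\H^mx_m$, this shows that the reduct of $\mathcal N'$ is a weak model of $\sigma$ for every $\sigma\in T$ — this is precisely the converse of the construction in the proof of Lemma \ref{lem:approxsound}. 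Thus the reduct of $\mathcal N'$ models $T$, hence $\phi$, and by elementarity $\mathcal N\models\phi$.

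With $T^*\models_w\phi$ established, and since $T^*\cup\set{\phi}\subseteq\FO(Q,\dual Q)$, I would invoke the completeness theorem for $\FO(Q,\dual Q)$ under weak semantics to obtain a derivation $T^*\vdash\phi$ using only $\FO(Q,\dual Q)$ formulas; for this one checks that the present rules restricted to $\FO(Q,\dual Q)$ — the classical first-order rules together with monotonicity (rule \ref{rule:mon}) and duality (rule \ref{rule:neg}) — form a complete calculus for the weak semantics. This derivation uses only finitely many approximations, and since $A^n\sigma\vdash A^{n'}\sigma$ for $n'\le n$ (obtained by repeating tuples), I may assume that for each $\sigma$ a single $A^{n}\sigma$ occurs. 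Finally I apply (Approx) once for each $\sigma\in T$ to discharge the assumptions $B\sigma$ and $A^{n}\sigma$ against the premise $\sigma$, yielding $T\vdash\phi$. I expect the main obstacle to be the semantic reduction $T^*\models_w\phi$: it requires both compactness of the weak semantics and the delicate passage from the finite consistency expressed by the $A^n\sigma$ to genuine global Skolem functions witnessing the dependence atoms; a secondary point to verify is that a complete $\FO(Q,\dual Q)$ subsystem is indeed available in the present calculus and that its derivations are reproducible there.
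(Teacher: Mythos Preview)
Your overall architecture---replace each $\sigma\in T$ by its approximations $B\sigma$ and $A^n\sigma$, appeal to completeness of $\FO(Q,\dual Q)$ with weak semantics, then discharge the approximations via (Approx)---is essentially the paper's strategy, run in the direct rather than contrapositive direction. The paper assumes $T\nvdash\phi$, infers $T'\cup\{\lnot\phi\}\nvdash\bot$ for $T'=\{B\sigma,A^n\sigma:\sigma\in T,n\in\mathbb N\}$, and then builds a single weak model of $T\cup\{\lnot\phi\}$.

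The genuine gap is in your compactness argument for $T^*\models_w\phi$. You add, for each tuple $\bar a\in R_\sigma^{\mathcal N}$, the Skolem instance $\theta_\sigma(\bar a,f_1^\sigma(\bar a^1),\ldots)$ and then invoke compactness to get $\mathcal N'\succeq\mathcal N$. But in $\mathcal N'$ the relation $R_\sigma^{\mathcal N'}$ may be strictly larger than $R_\sigma^{\mathcal N}$, and your theory says nothing about the behaviour of the $f_i^\sigma$ on tuples $\bar b\in R_\sigma^{\mathcal N'}\setminus R_\sigma^{\mathcal N}$. So you cannot conclude that the team coded by $R_\sigma^{\mathcal N'}$ satisfies the matrix, and hence you do not get $\mathcal N'\models\sigma$. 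Replacing the instances by the universal sentence $\forall\bar x\bigl(R_\sigma(\bar x)\to\theta_\sigma(\bar x,f_1^\sigma(\bar x^1),\ldots)\bigr)$ does not help directly: its consistency with the elementary diagram of $\mathcal N$ is precisely the existence of global Skolem functions in some elementary extension, which is the very thing you are trying to establish from the approximations.

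The paper supplies the missing ingredient by restricting attention to a \emph{countable recursively saturated} weak model of $T'\cup\{\lnot\phi\}$ (Lemma~\ref{lemrec}, obtained via the Keisler--Kaufmann reduction of $\FO(Q)$ to $\FO$). In such a model, the approximations $A^n\sigma$ for all $n$ do yield the required Skolem functions: this is Lemma~\ref{lemrec2}, which recognises the $A^n\sigma$ as the Barwise finite approximations of the $\df$ sentence $\sigma'=\forall\bar x\exists\bar y\bigl(\bigwedge_i\dep(\bar x^i,y_i)\wedge(R(\bar x)\to\theta)\bigr)$ and invokes the theorem of \cite{barwise1976some} that in a countable recursively saturated structure the approximations imply $\sigma'$. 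Your proposal correctly anticipates that ``the delicate passage from the finite consistency expressed by the $A^n\sigma$ to genuine global Skolem functions'' is the crux; recursive saturation is exactly the device that makes that passage go through, and your compactness detour does not substitute for it.
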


The following lemmas are needed in the proof.

\begin{lemma}\label{lemrec}
	If $T$ is a set of $\FO(Q,\dual Q)$-sentences consistent in the deduction system described above then there are a countable recursively saturated
	model $\M$ and an interpretation $q$ of $Q$ such that $(\M,q,\dual q) \models T$.
\end{lemma}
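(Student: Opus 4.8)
The plan is to prove this as a model-existence result by a Henkin-style construction tailored to the weak semantics, reading off both the structure $\M$ and the interpretation $q$ from a single complete, witnessed, recursively saturated consistent extension of $T$. First I would expand the vocabulary by countably many fresh constants and extend $T$ to a consistent set $T^{*}$ of $\FO(Q,\dual Q)$-sentences in the enlarged language that is (i) maximal, so that for every sentence $\phi$ exactly one of $\phi,\lnot\phi$ lies in $T^{*}$; (ii) Henkin-complete, so that whenever $\exists x\,\phi\in T^{*}$ some constant $c$ satisfies $\phi[c/x]\in T^{*}$; and (iii) recursively saturated, in the sense described below. Because the rules are finitary, a Lindenbaum enumeration produces such a $T^{*}$, and the sentences $Qx\,\phi$ and $\dual Qx\,\phi$ are simply decided along the way, no witnessing constant being required for them. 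I would then take $\M$ to be the term model: its universe is the set of closed terms modulo $t\sim t'\iff (t=t')\in T^{*}$, with functions and relations interpreted canonically. Since the language is countable, $\M$ is countable.

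The heart of the argument is the definition of $q$ and the accompanying truth lemma. I would set
\[
q=\bigl\{\,A\subseteq M : \{a\in M:\M\models\psi(a)\}\subseteq A\ \text{for some }\psi(x)\text{ with }Qx\,\psi\in T^{*}\,\bigr\},
\]
and let $\dual q$ be its dual. By construction $q$ is monotone increasing, and non-triviality ($\emptyset\notin q$ and $M\in q$) is secured by the non-triviality conditions on $Q$ reflected in the system: $Qx\,(x=x)\in T^{*}$ puts $M$ in $q$, while if $\{a:\M\models\psi(a)\}=\emptyset$ for some $\psi$ with $Qx\,\psi\in T^{*}$, then $\forall x\,\lnot\psi\in T^{*}$ and the monotonicity rule \ref{rule:mon} would yield a derivation of $Qx\,\bot$, hence of $\bot$, contradicting consistency. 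I would then prove $\M\models\phi\iff\phi\in T^{*}$ for all $\FO(Q,\dual Q)$-sentences by a simultaneous induction, the only new cases being $Qx\,\phi$ and $\dual Qx\,\phi$. For $Qx\,\phi$ the implication $Qx\,\phi\in T^{*}\Rightarrow\M\models Qx\,\phi$ is immediate from the definition of $q$ and the inductive hypothesis; conversely, if $\M\models Qx\,\phi$ then $\{a:\M\models\phi(a)\}\supseteq\{a:\M\models\psi(a)\}$ for some $\psi$ with $Qx\,\psi\in T^{*}$, whence $\forall x(\psi\imp\phi)\in T^{*}$, and $\forall$E followed by the monotonicity rule \ref{rule:mon} gives $Qx\,\phi\in T^{*}$. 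The case of $\dual Qx\,\phi$ reduces to the $Q$-case: unfolding the dual interpretation, $\M\models\dual Qx\,\phi$ iff $Qx\,\lnot\phi\notin T^{*}$ iff $\lnot Qx\,\lnot\phi\in T^{*}$, and this is equivalent to $\dual Qx\,\phi\in T^{*}$ by the duality rule \ref{rule:neg} together with the maximality of $T^{*}$. Hence $(\M,q,\dual q)$ is a weak model with $(\M,q,\dual q)\models T^{*}\supseteq T$.

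For recursive saturation I would interleave the realization of types into the construction of $T^{*}$. Fixing an effective enumeration of all recursive types $p(v)$ in finitely many parameters $\bar c$, at the corresponding stage I add $p(d)$ for a fresh constant $d$ to the theory built so far whenever $p(v)$ is finitely consistent with it over $\bar c$; since derivations are finite, consistency is preserved through the union, so $T^{*}$ stays consistent and every recursive type consistent with it is realized in $\M$. I expect the main obstacle to lie precisely in this interplay between recursive saturation and the non-first-order quantifier: the formulas occurring in the types contain $Q$ and $\dual Q$, whose truth is governed by the very interpretation $q$ that is read off the same theory, so one must check that the choices made for $Qx$- and $\dual Qx$-sentences during the Lindenbaum process—choices constrained by the monotonicity rule \ref{rule:mon} and duality rule \ref{rule:neg} so that the truth lemma's $Q$-clause survives—do not conflict with the simultaneous realization of all recursive types. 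Verifying that saturation and the prescribed, monotone, non-trivial behaviour of $q$ can be arranged together is the delicate point of the construction.
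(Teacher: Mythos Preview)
Your approach is correct in outline and defines $q$ exactly as the paper does, but the paper takes a shortcut that dissolves the very obstacle you worry about at the end. Instead of running the Henkin construction directly in $\FO(Q,\dual Q)$, the paper first eliminates $\dual Q$ via the translation $\dual Qx\phi\mapsto\lnot Qx\lnot\phi$ and then, following Keisler and Kaufmann, replaces every subformula $Qx\phi$ by a fresh atomic predicate $R_\phi(\bar y)$, adjoining the translated monotonicity and alphabetic-variant schemas as axioms. This produces an ordinary first-order theory $T'$ whose consistency follows from that of $T$; one then simply cites the standard fact that every consistent countable first-order theory has a countable recursively saturated model $\M^\ast$, takes $\M$ to be its reduct, and reads off $q$, invoking Kaufmann's Proposition~2.3.4 for the truth lemma. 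After the reduction the $Q$-formulas have become atomic, so there is no ``interplay between recursive saturation and the non-first-order quantifier'' at all: recursive saturation is the usual first-order notion, obtained as a black box, and $q$ is defined only afterwards and plays no role in the saturation step. Your direct route can be made to work too, but note one wrinkle: your definition of $q$ invokes $\M\models\psi(a)$, which is circular when $\psi$ itself contains $Q$ or $\dual Q$; writing $\psi(a)\in T^{*}$ instead removes the circularity and lets the reverse direction of the $Q$-clause in your truth lemma go through purely syntactically, via the Henkin property and rule~\ref{rule:mon}, without needing the inductive hypothesis for the auxiliary formula $\psi$.
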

\begin{proof}
	First translate $T$ to $T^\lnot$ in which each $\dual Qx\phi$ is replaced by $\lnot Qx \lnot \phi$.  
	By using the same argument as in \cite{keisler1970logic, kaufmann1985quantifier} we may reduce $\FO(Q)$ to $\FO$ by replacing subformulas of the form $Qx\phi$ with new relation symbols
	$R_\phi(\bar y)$, $\bar y$ being
	the free variables of $Qx\phi$. This will reduce the set $T^\lnot$ to a set $T^\ast$. 
	Let $T'$ be $T^\ast$ together with the translations of the universal closures of 
	\begin{itemize}
		\item $(\varphi \imp \psi) \imp (Q x \varphi \imp Q x \psi)$, for all $\varphi$ and
			$\psi$; and
		\item $Qx \phi \imp Qy (\phi[y/x])$, for all $\phi$ such that the substitution is legal.
	\end{itemize}
             Now $T'$ is consistent by the same argument as in \cite{kaufmann1985quantifier}.
	Let $\M^\ast$ be a countable recursively saturated model of $T'$, and $M$ its reduct to the
	original signature. Now we may define $q$ to be  
	$$ \set{A \subseteq M| A \supseteq \set{a \in M | \M^\ast, s[a/x] \models \phi^\ast} \text{ for some $\phi$ s.t.
		}\ M^\ast,s \models
	Qx\phi^\ast}.$$
	Proposition 2.3.4 in \cite{kaufmann1985quantifier} shows that $(\M,q) \models T^\lnot$, and thus
	$(\M,q,\dual q) \models T$.
\end{proof}

\begin{lemma}\label{lemrec2}
	In a countable recursively saturated weak model $(\M,q,\dual q)$ in which  $B\sigma$ and $A^n\sigma$ holds for all
$n$, $\sigma$ holds. 
\end{lemma}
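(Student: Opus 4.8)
The goal is $\M\models\sigma$, i.e.\ $\M,\{\emptyset\}\models\sigma$, so I plan to construct an explicit witnessing team. By the team semantics of the prefix this splits into two tasks: (A) produce a team $Y$ realizing the quantifier block $\H^1x_1\ldots\H^mx_m$ whose assignments all send $\bar x=(x_1,\dots,x_m)$ into the set named by $R$, and (B) find functions supplying the existential witnesses $y_1,\dots,y_n$ so that the dependence atoms $\dep(\bar x^i,y_i)$ and $\theta$ hold on $Y$. Task (A) will be read off from $B\sigma$, and task (B) is where the approximations and recursive saturation enter.

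For (A): since $B\sigma=\H^1x_1\ldots\H^mx_mR(\bar x)$ is an $\FO(Q,\dual Q)$ sentence it holds under the ordinary single-assignment semantics. I build teams $\{\emptyset\}=X_0\subseteq\cdots$, $X_m=:Y$, maintaining the invariant that every $s\in X_j$ satisfies $\M,s\models\H^{j+1}x_{j+1}\ldots\H^mx_mR(\bar x)$. At the $(j{+}1)$-st step I put $X_{j+1}=X_j[M/x_{j+1}]$ when $\H^{j+1}=\forall$, and when $\H^{j+1}$ is $Q$ (resp.\ $\dual Q$) I let $F(s)=\{a:\M,s[a/x_{j+1}]\models\H^{j+2}\ldots R(\bar x)\}$, which lies in $Q_M$ (resp.\ $\dual Q_M$) precisely by the single-assignment meaning of the invariant, and put $X_{j+1}=X_j[F/x_{j+1}]$. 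The resulting $Y$ witnesses the prefix and satisfies $\M,Y\models R(\bar x)$, so the projection of $Y$ to $\bar x$ is contained in $R^\M$. From here on $Q$ and $\dual Q$ are irrelevant, since $\theta$ and every $A^n\sigma$ are first order.

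For (B) I want functions $g_1,\dots,g_n$, with $g_i$ a function of the coordinates $\bar x^i$ only, such that $\M\models\theta(\bar a,g_1(\bar a^1),\dots,g_n(\bar a^n))$ for every $\bar a$ in the projection of $Y$; setting $f_i(s)=g_i(s(\bar x^i))$ then gives $\M,Y[f_1/y_1]\cdots[f_n/y_n]\models\bigwedge_i\dep(\bar x^i,y_i)\wedge\theta$ and hence $\M\models\sigma$. As $\M$ is countable I enumerate the relevant tuples $\bar a_0,\bar a_1,\dots$ and fix the values $g_i(\bar a_j^i)$ in stages, choosing a value only the first time a projection is seen and reusing it afterwards, so each $g_i$ is automatically well defined as a function of $\bar x^i$. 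For a finite partial assignment $P$ of such values let $A^k_P\sigma$ be the first-order relativization of $A^k\sigma$ in which the witnesses are additionally required to agree with $P$ wherever a coordinate tuple already lies in the domain of $P$. The invariant I carry is that $A^k_P\sigma$ holds for every $k$; at stage $0$, $P=\emptyset$ and this is exactly the hypothesis. At stage $j$, writing $P'$ for the extension of $P$ by fresh variables $\bar z$ for the new projections of $\bar a_j$, the set $\{A^k_{P'}\sigma:k\ge1\}$ is a \emph{recursive} type in $\bar z$ over the finitely many parameters occurring in $P$ and $\bar a_j$.

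The crux is finite satisfiability of this type, for that is all recursive saturation needs in order to realize it by an actual tuple $\bar z$ of $\M$, after which $P'$ inherits the invariant and the stage is complete. Here the \emph{alternating} $\forall\bar x_1\exists\bar y_1\cdots\forall\bar x_k\exists\bar y_k$ shape of the approximations is decisive. Using the monotonicity $A^{k_0}_{P'}\sigma\Rightarrow A^k_{P'}\sigma$ for $k\le k_0$ (obtained by repeating a tuple), it suffices to solve $A^{k_0}_{P'}\sigma$ for each $k_0$ separately. Instantiating the leading universal block of $A^{k_0+1}_P\sigma$ at the tuple $\bar a_j$ and extracting the corresponding existential witness $\bar b$ — which, by the alternation, may be chosen before the later tuples are revealed — yields a value of $\bar z$ for which the remaining quantifiers certify $A^{k_0}_{P\cup\{\bar a_j\mapsto\bar b\}}\sigma$, and in particular $\theta(\bar a_j,\bar b)$. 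Thus each finite fragment is realized (by a possibly $k_0$-dependent $\bar b$), recursive saturation produces a single $\bar z$ realizing the whole type, and iterating over all stages defines the $g_i$ and completes the proof. The main difficulty to watch is precisely this quantifier swap: without the alternation in $A^n\sigma$ one could not pass from ``witnesses exist for every finite set of tuples'' to ``one fixed choice of $g_i$-value works,'' and it is the interplay of that alternation with recursive saturation that bridges the gap.
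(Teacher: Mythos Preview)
Your argument is correct and follows the same two–part decomposition as the paper: (A) use $B\sigma$ to produce a team witnessing the $\H$-prefix whose assignments all land inside $R^{\M}$, and (B) use the approximations $A^n\sigma$ together with recursive saturation to supply dependence-respecting existential witnesses on that team.

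The two proofs differ only in how much is unpacked. For (B), the paper's key move is to observe that the sentences $A^n\sigma$ are exactly the classical finite approximations (in the sense of Barwise) of the pure dependence-logic sentence
\[
\sigma'\ =\ \forall\bar x\,\exists\bar y\,\Bigl(\bigwedge_{i}\dep(\bar x^i,y_i)\wedge\bigl(R(\bar x)\to\theta\bigr)\Bigr),
\]
and then to invoke as a black box the known theorem that in a countable recursively saturated model these approximations already entail $\sigma'$. You instead reprove that theorem inline, carrying out the recursive-saturation construction by hand and making explicit how the alternating $\forall\bar x_j\exists\bar y_j$ shape of $A^{n}\sigma$ is what lets one fix the witness for $\bar a_j$ before later tuples are revealed. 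For (A), the paper takes the entire relation $R^{\M}$ as the target team and defines the $F_i$ as successive slices of $R$, reading $F_i(s)\in\H^i_M$ off from $B\sigma$; your inductive construction via the single-assignment semantics of $B\sigma$ is a minor variant that makes the containment of the resulting team in $R^{\M}$ immediate. The paper's route is shorter and isolates a reusable model-theoretic lemma; yours is self-contained.
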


\begin{proof}
	Suppose $\sigma$ is
$$
\H^1 x_1\ldots \H^m x_m \exists y_1\ldots \exists y_n \bigl(\bigwedge _{1\le i \le
n}\dep(\overline{x}^i,y_i)\wedge\theta\bigr).
$$
Note that the sentences $A^n\sigma$ can be viewed as the finite approximations as defined in
\cite{Kontinen:2012} (and see also \cite{barwise1976some}) of the $\df$ sentence $\sigma'$:
$$\forall \bar x \exists \bar y \bigl(\bigwedge _{1\le i \le
n}\dep(\overline{x}^i,y_i)\wedge(R(\bar x) \imp \theta)\bigr).$$
Thus by Theorem 2.4 in \cite{barwise1976some} (see also \cite{Kontinen:2012}), we know that $\M \models \sigma'$.

Let $X$ be the team $\set{s: \set{x_1,\ldots,x_k} \to M | (s(x_1),\ldots,s(x_m)) \in R^\M}$.
To prove that $(\M,q,\dual q) \models \sigma$ we find $F_1, \ldots, F_m$ so that
$\set{\emptyset}[F_1/x_1]\ldots[F_m/x_m] = X.$

\begin{align*}
	F_1(\emptyset) &= X \upharpoonright \set{x_1},\\ 
	F_{i+1}&: \set{\emptyset}[F_1/x_1]\ldots[F_{i}/x_{i}] \to M\\
  F_{i+1}(s) &= \set{a \in M | \exists s' \in X:
(s'(x_1),\ldots,s'(x_{i+1}))=(s(x_1),\ldots,s(x_i),a)}.
\end{align*}
By the assumption $(\M,q,\dual q)\models B\sigma$ it follows that $F_i(s) \in \H^{i}_M$. Furthermore, since  $\M \models \sigma'$ we get  that
$$\M,X \models \exists y_1\ldots \exists y_n \bigl(\bigwedge _{1\le i \le
n}\dep(\overline{x}^i,y_i)\wedge\theta\bigr).
$$
Therefore
$$(\M,q,\dual q)\models \H^1 x_1\ldots \H^m x_m \exists y_1\ldots \exists y_n \bigl(\bigwedge _{1\le i \le
n}\dep(\overline{x}^i,y_i)\wedge\theta\bigr)$$
as wanted.
\end{proof}

\begin{proof}[Proof of Theorem \ref{mainthm}.]
	(I) $\Rightarrow$ (II): This is just a special case (for sentences) of soundness.

	(II) $\Rightarrow$ (I):  Suppose $T \nvdash \phi$, where $\phi$ is a $\FO(Q,\dual Q)$-sentence. 
	We will construct a weak model of $T \cup \set{\lnot \phi}$ showing that $T \nvDash_w \phi$.
	Replacing $T$ with the set $T'=\set{B\sigma, A^n\sigma | \sigma \in T, n \in \mathbb N}$ we can
	conclude that $T' \cup \set{\lnot \phi} \nvdash \bot$. 
By applying Lemma \ref{lemrec} we get 
	 a weak countable recursively saturated model $(M,q,\dual q)$ of $T'$.
	Lemma \ref{lemrec2} implies that $(M,q,\dual q) \models T$. Now since $(M,q,\dual q) \nvDash
	\phi$, we get  $T\not \models_w \phi$ as wanted.
\end{proof}

\subsection{Completeness for $\df(Q_1,\dual{Q_1})$}

We will now prove a completeness result similar to Theorem \ref{mainthm} for the logic
$\df(Q,\dual Q)$ where $Q$ is interpretated as $Q_1$, the quantifier ``there exists uncountably
many.'' In this section we consider only structures over uncountable universes.

We add the following two rules from \cite{keisler1970logic}  to the system presented in Section 3.
Note that the approximation rule of section 4.2 is not included. 

$$\infer{\lnot Qx (x = y \lor x = z)}{}$$

$$\infer{\exists y Qx \varphi \lor Qy\exists x \varphi}{Qx \exists y \varphi}$$

The intuitive meaning of the second rule is that a countable union of countable sets is countable.
The first is needed to avoid $Q$ being interpreted as the quantifier ``the exists at least
two.''

For each $\df(Q,\dual Q)$ sentence $\sigma$
	$$\H^1 x_1\ldots \H^m x_m \exists y_1\ldots \exists y_n \bigl(\bigwedge _{1\le i \le
n}\dep(\overline{x}^i,y_i)\wedge\theta\bigr)
 $$
	in normal form we define the Skolem translation $S\sigma$ of
	$\sigma$ to be:
	$$
\H^1 x_1\ldots \H^m x_m   \theta(f_i(\bar x^i)/y_i) ,
	$$
	where the $f_i$'s  are new function symbols of the right arity. 
If $\sigma$ is a sentence in the signature $\tau$ then $S\sigma$ will be in the extended signature
$\tau \cup \set{f_1, \ldots, f_n}$. 

The last rule of the deduction system is the following:

\begin{prooftree}
\AxiomC{$\sigma$}
\AxiomC{$[S\sigma]$}
\noLine
\UnaryInfC{$\vdots$}
\noLine
\UnaryInfC{$\psi$}
\RightLabel{(Skolem)}
\BinaryInfC{$\psi$}
\end{prooftree}

Here $\sigma$ is a $\df(Q,\dual Q)$ sentence in normal form, and the function symbols
$f_1,\ldots,f_n$ do not occur in $\psi$ nor in any uncancelled assumption of the derivation of $\psi$, except for $S\sigma$.

\begin{proposition}\label{prop:sound2}
	If $T \vdash \phi$ in the deduction system for $\df(Q_1,\dual{Q}_1)$ then $T \vDash \phi$. 
\end{proposition}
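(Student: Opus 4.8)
The plan is to prove the proposition by induction on the length of the derivation, in exactly the style of Proposition \ref{soundness} and Lemma \ref{lem:approxsound}: for a derivation of $\phi$ from a set $\Gamma$ of formulas I would show that whenever $\dom(X)\supseteq \FV(\Gamma)\cup\FV(\phi)$ and $\M,X\models\Gamma$ (with $Q$ interpreted as $Q_1$ over an uncountable universe), then $\M,X\models\phi$. Since $Q_1$ is monotone increasing and, over uncountable universes, non-trivial (any set $\{a: a=s(y)\lor a=s(z)\}$ is countable, so $(M,\emptyset)\notin Q_1$, while $M$ itself is uncountable, so $(M,M)\in Q_1$), every rule of Section \ref{Ded} is already covered by Proposition \ref{soundness}. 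Thus only three new induction steps must be supplied: the two Keisler rules and the Skolem rule.

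The first Keisler rule is sound because $\lnot Qx(x=y\lor x=z)$ is valid under the $Q_1$ interpretation: for every $s$ the set $\{a\in M: a=s(y)\lor a=s(z)\}$ has at most two elements, hence is countable and not in $(Q_1)_M$, so $\M,s\models\lnot Qx(x=y\lor x=z)$. As this is an $\FO(Q,\dual Q)$ formula, satisfaction by a team reduces to satisfaction by each of its assignments, and the conclusion holds in every team $X$.

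For the second Keisler rule I would first observe that when $\varphi$ is an $\FO(Q,\dual Q)$ formula the formulas $Qx\exists y\varphi$, $\exists y Qx\varphi$ and $Qy\exists x\varphi$ all have \emph{flat} team semantics, since flatness is preserved when a flat formula is nested under $\exists$ and under the monotone $Q$ (by downward closure, Proposition \ref{prop:4}(\ref{en1}), together with monotonicity). The team-semantic soundness then reduces to the pointwise first-order validity of $Qx\exists y\varphi\imp(\exists y Qx\varphi\lor Qy\exists x\varphi)$ under the $Q_1$ interpretation, which is the classical Keisler axiom: its content is that a countable union of countable sets is countable. Concretely, if for a given $s$ no single value $b$ makes $\{a:\M,s[a/x][b/y]\models\varphi\}$ uncountable, then the $y$-values realized by the uncountably many witnessing $x$'s must themselves be uncountable, giving $Qy\exists x\varphi$; otherwise some $b$ witnesses $\exists y Qx\varphi$. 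The main obstacle is to carry this through for the team semantics in full generality: one splits $X=X_1\cup X_2$, placing into $X_1$ those $s$ for which some $y$-value has uncountably many $x$-witnesses and into $X_2$ the remaining $s$, and then checks $\M,X_1\models\exists y Qx\varphi$ and $\M,X_2\models Qy\exists x\varphi$, so that $\M,X\models\exists y Qx\varphi\lor Qy\exists x\varphi$ by the semantics of $\lor$.

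Finally, the Skolem rule is handled exactly as the approximation rule in Lemma \ref{lem:approxsound}. Assuming $\M,X\models\Gamma$, the induction hypothesis gives $\Gamma\models\sigma$, hence $\M\models\sigma$ since $\sigma$ is a sentence. Because $\sigma$ is in the normal form \eqref{NF}, there is a team $Y$ witnessing the prefix $\H^1 x_1\ldots\H^m x_m$ together with functions witnessing $\exists y_1\ldots\exists y_n$, and the atoms $\dep(\bar x^i,y_i)$ say precisely that each $y_i$ is determined by $\bar x^i$; these witnesses therefore define interpretations of the new symbols $f_1,\ldots,f_n$ in an expansion $\M'$ of $\M$ with $\M'\models S\sigma$. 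Since $f_1,\ldots,f_n$ occur neither in $\psi$ nor in the remaining assumptions $\Gamma'$, we have $\M',X\models\Gamma',S\sigma$; the induction hypothesis yields $\M',X\models\psi$, and as the $f_i$ do not occur in $\psi$ this gives $\M,X\models\psi$, as required.
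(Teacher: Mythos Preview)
Your treatment of the first Keisler axiom and of the Skolem rule is correct and matches the paper's proof essentially verbatim.

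The gap is in the second Keisler rule. You restrict to the case where $\varphi$ is an $\FO(Q,\dual Q)$ formula and argue via flatness, but the rule as stated in the paper carries no such restriction: $\varphi$ may contain dependence atoms, and then none of $Qx\exists y\varphi$, $\exists yQx\varphi$, $Qy\exists x\varphi$ is flat, so the reduction to pointwise validity of Keisler's axiom breaks down. Your split of $X$ into the $s$ ``for which some $y$-value has uncountably many $x$-witnesses'' is meaningful only when ``$x$-witness'' refers to single-assignment satisfaction of $\varphi$, which is not the right notion for team semantics once dependence atoms are present.

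The paper handles the general case directly in team semantics. From $\M,X\models Q_1x\exists y\varphi$ one first extracts $F\colon X\to (Q_1)_M$ and $f\colon X[F/x]\to M$ with $\M,X[F/x][f/y]\models\varphi$. For each $s\in X$ set $R_s=\{(a,f(s[a/x])):a\in F(s)\}$; then $(M,R_s)\models Q_1x\exists yR(x,y)$, and the classical Keisler axiom splits $X=Y\cup Z$ according to whether $(M,R_s)\models\exists yQ_1xR(x,y)$ or $(M,R_s)\models Q_1y\exists xR(x,y)$. The point is that the choices $g,G$ for $Y$ (and their analogues for $Z$) are made relative to $R_s$, i.e.\ relative to the already fixed $F,f$, and one obtains $Y[g/y][G/x]\subseteq X[F/x][f/y]$; now downward closure of $\df(Q,\dual Q)$ (Proposition~\ref{prop:4}(\ref{en1})) gives $\M,Y[g/y][G/x]\models\varphi$ for arbitrary $\varphi$, and similarly for $Z$. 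Thus the missing ingredient in your argument is precisely this use of the witnessing functions $F,f$ together with downward closure in place of flatness.
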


\begin{proof}
	We extend the proof of Proposition \ref{soundness} to also cover the three new rules:

	(1) The soundness of the first rule is easily seen by observing that the formula $Q_1x (x=y \lor
	x=z)$ is a $\FO(Q_1)$ formula and thus a team satisfies it iff every assignment in the team
	satisfies the formula. 

(2) For the second rule we need to prove that if $\M,X \models \Gamma,Q_1x \exists y \phi$ then $\M,X
\models \Gamma,\exists y Q_1x \phi \lor Q_1y \exists x \phi$. By the assumption we get  functions
$F: X \to Q_M$ and $f: X[F/x] \to M$ such that $\M,X[F/x][f/y] \models \phi$. 
Thus, for each $s \in X$ there is a binary relation $R_s=\set{(a,f(s[a/y])) | a\in F(s)}$ such that 
$(M,R_s) \models Q_1x \exists y R(x,y)$. 
Let $$Y =\set{s \in X | (M,R_s) \models \exists yQ_1x R(x,y) }$$ and 
$$Z =\set{s \in X | (M,R_s) \models Q_1 y \exists x R(x,y) }.$$ By the validity of the rule for $\FO(Q_1)$ we see that $X = Y \cup Z$.

It should be clear that $Y \models \exists y Q_1x \phi$ since by letting $g(s)$ be such that $(M,R_s) \models Q_1 x R(x,g(s))$ and
$$G(s[g(s)/y]) = \set{a \in M | (M,R_s) \models R(a,g(s))},$$ we have that
$Y[g/y][G/x] \subseteq X[F/x][f/y]$ and thus by downward closure 
$$\M,Y[g/y][G/x] \models \phi.$$

Similarly we can prove that $\M,Z \models Q_1y \exists x \phi$, and thus that $\M,X
\models \Gamma,\exists y Q_1x \phi \lor Q_1y \exists x \phi$.

(3) For the Skolem rule assume that there is a derivation of $\psi$ from $\Gamma$ ending with
	the Skolem rule. Then there are shorter derivations from $\Gamma$ of $\sigma$ and from
	$\Gamma,S\sigma$ of $\psi$. By the induction hypothesis we get $\Gamma \models \sigma$
	and $\Gamma,S\sigma\models \psi$. We will prove that $\Gamma \models \psi$, by
	assuming $\M,X \models \Gamma$ for some non-empty $X$ and proving that $\M,X \models \psi$. 

	From the proof of Theorem 3.5 in \cite{Engstrom.Kontinen:2013} we see that $\M \models \sigma$ iff 
$\M \models \exists f_1\ldots\exists f_k S\sigma$. From $\M,X \models \Gamma$ and $\Gamma \models \sigma$ we get that $\M \models \sigma$ and thus there are $f_1,\ldots,f_k$ such that $(M,f_1,\ldots,f_k) \models S\sigma$. 
Now since  the $f_i$'s do not occur in formulas $\Gamma'\subseteq \Gamma$ used in the derivation of $\psi$,  $\M,X \models \Gamma$ implies that 
 $(M,f_1,\ldots,f_k),X \models \Gamma'$. By locality, we also have  $(M,f_1,\ldots,f_k),X \models S\sigma$. Therefore,
by the induction hypothesis, we get that $(M,f_1,\ldots,f_k),X \models \psi$, and, since  the $f_i$'s do not occur in $\psi$,   $\M,X \models \psi$ follows.
\end{proof}

\begin{theorem}\label{comp2}
If $T$ is a set of $\df(Q_1,\dual Q_1)$ sentences and $\phi$ is a $\FO(Q_1,\dual Q_1)$ sentence then $T \vdash \phi$ iff $T \models \phi$.
\end{theorem}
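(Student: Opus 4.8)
The direction $T \vdash \phi \Rightarrow T \models \phi$ is precisely the soundness statement established in Proposition \ref{prop:sound2}, so the work lies entirely in the converse, which I would prove contrapositively. Assuming $T \nvdash \phi$, the goal is to produce a structure $\M$ with uncountable universe, in which $Q$ is the genuine quantifier $Q_1$, such that $\M \models T$ while $\M \nvDash \phi$. The whole plan is to reduce this to Keisler's completeness theorem for $\FO(Q_1)$ \cite{keisler1970logic}, whose axioms are exactly the two rules added at the start of this section and are therefore available inside our system.

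First I would pass to a purely first-order (with $Q_1$) theory. Using Proposition \ref{prop:nf}, replace each $\sigma \in T$ by a provably equivalent normal-form sentence $\sigma'$, and translate every occurrence of $\dual Q$ away via $\dual Q x\, \chi \equiv \neg Q x\, \neg \chi$ using the duality rule \ref{rule:neg}, exactly as in the reduction $T \mapsto T^\lnot$ of Lemma \ref{lemrec}. Applying the Skolem translation to each normal-form sentence, set $T^S = \set{S\sigma' \mid \sigma \in T}$; each $S\sigma'$ is an $\FO(Q_1)$ sentence over the signature enlarged by fresh Skolem function symbols, chosen pairwise disjoint for distinct $\sigma$.

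The heart of the argument is the claim that $T^S \cup \set{\neg\phi}$ is consistent in Keisler's system for $\FO(Q_1)$. If it were not, a derivation of $\bot$ would use only finitely many premises $S\sigma'_1, \ldots, S\sigma'_k$ together with $\neg\phi$; then $k$ successive applications of the Skolem rule—discharging $S\sigma'_i$ one at a time, which is legitimate since the Skolem functions of $\sigma'_i$ are fresh and occur in no other premise nor in $\bot$—would convert it into a derivation of $\bot$ from $\sigma'_1, \ldots, \sigma'_k, \neg\phi$, hence from $T \cup \set{\neg\phi}$ by the equivalences of Proposition \ref{prop:nf}. By RAA (rule \ref{rule:neg}, applicable since $\phi \in \FO(Q_1,\dual Q_1)$) this yields $T \vdash \phi$, contradicting the hypothesis. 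Granting consistency, Keisler's completeness theorem furnishes an uncountable model $(\M^\ast, Q_1)$ of $T^S \cup \set{\neg\phi}$; let $\M$ be its reduct to the original signature. For each $\sigma$, since $\M^\ast \models S\sigma'$, the equivalence $\M \models \sigma' \iff \M \models \exists f_1 \cdots \exists f_k\, S\sigma'$ from the proof of Theorem 3.5 in \cite{Engstrom.Kontinen:2013} (the same fact used in Proposition \ref{prop:sound2}) gives $\M \models \sigma'$, so $\M \models \sigma$; thus $\M \models T$. Since $\phi$ mentions no Skolem symbol, $\M \models \neg\phi$ as well, whence $T \nvDash \phi$.

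The step I expect to be the main obstacle is this consistency transfer: one must verify that the side conditions of the Skolem rule can be met simultaneously across all finitely many discharged sentences, which I arrange by assigning disjoint families of fresh function symbols to distinct $\sigma$, so that no Skolem function of $\sigma'_i$ survives in the remaining premises when $S\sigma'_i$ is discharged. A subsidiary point to confirm is that Keisler's axiomatization is genuinely a subsystem of the one in this section and that his theorem delivers a model of uncountable cardinality interpreting $Q$ as the standard $Q_1$, as required by the convention that only uncountable universes are considered here.
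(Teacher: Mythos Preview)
Your proposal is correct and follows essentially the same approach as the paper's proof: pass to normal form via Proposition~\ref{prop:nf}, take Skolem translations to obtain an $\FO(Q_1)$ theory, argue that consistency is preserved because a derivation of $\bot$ could be pulled back through finitely many applications of the Skolem rule, then invoke Keisler's completeness and the equivalence $\M \models \sigma' \iff \M \models \exists \bar f\, S\sigma'$ from \cite{Engstrom.Kontinen:2013}. Your treatment is in fact more explicit than the paper's on two points the paper glosses over---the need for pairwise disjoint Skolem symbols to satisfy the side condition of the Skolem rule across multiple discharges, and the elimination of $\dual Q$ before appealing to Keisler---so there is nothing to correct.
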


\begin{proof}
Assume $T \nvdash \phi$. We build a model of $T'=T \cup \set{\lnot \phi} \nvdash \bot$ by translating sentences $\sigma$ of $T$ into normal form $\sigma_{\text{nf}}$ and considering the $\FO(Q,\dual Q)$ theory $T_S=\set{ S \sigma_{\text{nf}}| \sigma \in T} \cup \set{\lnot \phi}$. This theory is consistent, since otherwise the Skolem rule and Proposition \ref{prop:nf} would allow us to derive a contradiction from $T'$.

Since the deduction system for $\df(Q_1,\dual Q_1)$ contains Keisler's system \cite{keisler1970logic} we may apply the completeness theorem for $\FO(Q_1)$ and get a model $\M$ of $T_S \cup \set{\lnot \phi}$. 
By the remark made in the proof of Proposition \ref{prop:sound2} and Proposition \ref{prop:nf} $\M$ is also a model of $T \cup \set{\lnot \phi}$. Thus $T \nvDash \phi$.
\end{proof}

\section{Conclusion}

In this article we have presented inference rules and axioms for extensions of dependence logic by monotone generalized quantifiers.  We also proved two completeness results for $\FO(Q)$ consequences in the cases where  $Q$  either has a  weak interpretation or $Q$ it is interpreted as ``there exists uncountable many.'' In the first completeness theorem,  an important feature of the proof is the approximation of a  $\df(Q_1,\dual{Q}_1)$ sentence by an infinite set of $\FO(Q)$ sentences. In the second completeness theorem the approximations were replaced by the  Skolem rule which however is slightly unsatisfactory due to the extra function symbols $f_i$ used in its formulation. In future work our plan is to further analyze the completeness theorem of $\df(Q_1,\dual{Q}_1)$,  and  replace the Skolem rule with rules that do not rely on the explicit use of the Skolem functions $f_i$. 

\section{Acknowledgements}
The second and the third author were supported by grants 264917 and 251557 of the Academy of Finland. The first author was supported by the Swedish Research Council.

\vspace{1cm} \noindent
\begin{tabular}{l}
{\it Fredrik Engstr\"om}\\
Department of Philosophy, Linguistics and Theory of Science\\
University of Gothenburg, Sweden\\
\tt{fredrik.engstrom@gu.se}\\
\\
{\it Juha Kontinen}\\
Department of Mathematics and Statistics\\
University of Helsinki, Finland\\
\tt{juha.kontinen@helsinki.fi}\\
\\
{\it Jouko V\"a\"an\"anen}\\
Department of Mathematics and Statistics\\
University of Helsinki, Finland\\
and\\
Insitute for Logic, Language
 and Computation\\
University of Amsterdam,
 The Netherlands\\
\tt{jouko.vaananen@helsinki.fi}\\
\end{tabular}

\bibliographystyle{plain}

\bibliography{ref_compl}

\begin{thebibliography}{10}

\bibitem{barwise1976some}
J.~Barwise.
\newblock Some applications of henkin quantifiers.
\newblock {\em Israel journal of mathematics}, 25(1):47--63, 1976.

\bibitem{MR527403}
Jon Barwise.
\newblock On branching quantifiers in {E}nglish.
\newblock {\em J. Philos. Logic}, 8(1):47--80, 1979.

\bibitem{DBLP:conf/birthday/Dahlhaus87}
Elias Dahlhaus.
\newblock Skolem normal forms concerning the least fixpoint.
\newblock In {\em Computation Theory and Logic}, pages 101--106, 1987.

\bibitem{Engstrom:2011}
Fredrik Engstr{\"o}m.
\newblock Generalized quantifiers in dependence logic.
\newblock {\em Journal of Logic, Language and Information}, 21:299--324, 2012.

\bibitem{Engstrom.Kontinen:2013}
Fredrik Engstr{\"o}m and Juha Kontinen.
\newblock Characterizing quantifier extensions of dependence logic.
\newblock {\em Journal of Symbolic Logic}, 78(1):307--316, 2013.

\bibitem{MR1457200}
Lauri Hella, Jouko V{\"a}{\"a}n{\"a}nen, and Dag Westerst{\aa}hl.
\newblock Definability of polyadic lifts of generalized quantifiers.
\newblock {\em J. Logic Lang. Inform.}, 6(3):305--335, 1997.

\bibitem{Henkin:1961}
L.~Henkin.
\newblock Some remarks on infinitely long formulas.
\newblock In {\em Infinitistic {M}ethods ({P}roc. {S}ympos. {F}oundations of
  {M}ath., {W}arsaw, 1959)}, pages 167--183. Pergamon, Oxford, 1961.

\bibitem{DBLP:journals/siamcomp/Immerman87}
Neil Immerman.
\newblock Languages that capture complexity classes.
\newblock {\em SIAM J. Comput.}, 16(4):760--778, 1987.

\bibitem{kaufmann1985quantifier}
M.~Kaufmann.
\newblock The quantifier ``there exist uncountably many", and some of its
  relatives.
\newblock In J.~Barwise and S.~Feferman, editors, {\em Perspectives in
  Mathematical Logic. Model Theoretic Logics.}, pages 123--176. Springer
  Verlag, 1985.

\bibitem{keisler1970logic}
H.J. Keisler.
\newblock Logic with the quantifier``there exist uncountably many".
\newblock {\em Annals of Mathematical Logic}, 1(1):1--93, 1970.

\bibitem{MR1336413}
Phokion~G. Kolaitis and Jouko~A. V{\"a}{\"a}n{\"a}nen.
\newblock Generalized quantifiers and pebble games on finite structures.
\newblock {\em Ann. Pure Appl. Logic}, 74(1):23--75, 1995.

\bibitem{Kontinen:2012}
Juha Kontinen and Jouko V{\"a}{\"a}n{\"a}nen.
\newblock Axiomatizing first order consequences in dependence logic.
\newblock To appear in Annals of Pure and Applied Logic.

\bibitem{Kontinen:2009}
Juha Kontinen and Jouko~A. V{\"a}{\"a}n{\"a}nen.
\newblock On definability in dependence logic.
\newblock {\em Journal of Logic, Language and Information}, 18(3):317--332,
  2009.

\bibitem{Lindstrom:1966}
Per Lindstr{\"o}m.
\newblock First order predicate logic with generalized quantifiers.
\newblock {\em Theoria}, 32:186--195, 1966.

\bibitem{makowsky1977some}
JA~Makowsky and S.~Tulipani.
\newblock Some model theory for monotone quantifiers.
\newblock {\em Archive for Mathematical Logic}, 18(1):115--134, 1977.

\bibitem{Mostowski:1957}
Andrzej Mostowski.
\newblock On a generalization of quantifiers.
\newblock {\em Fund. Math.}, 44:12--36, 1957.

\bibitem{Peters2006-PETQIL}
Stanley Peters and Dag Westerst\aa{}hl.
\newblock {\em Quantifiers in Language and Logic}.
\newblock Clarendon Press, 2006.

\bibitem{Vaananen:2007}
Jouko V{\"a}{\"a}n{\"a}nen.
\newblock {\em Dependence Logic - A New Approach to Independence Friendly
  Logic}, volume~70 of {\em London Mathematical Society Student Texts}.
\newblock Cambridge University Press, Cambridge, 2007.

\end{thebibliography}

\end{document}